\numberwithin{equation}{section}
\theoremstyle{plain}
\newtheorem{theorem}{Theorem}[section]
\newtheorem{corollary}[theorem]{Corollary}
\newtheorem{prop}[theorem]{Proposition}
\newtheorem{lemma}[theorem]{Lemma}
\theoremstyle{remark}
\theoremstyle{definition}
\newtheorem{definition}[theorem]{Definition}
\newcommand{\e}{\varepsilon}
\newcommand{\N}{\mathbb{N}}
\newcommand{\R}{\mathbb{R}}
\newcommand{\dist}{\mathrm{dist}}
\newcommand{\cR}{\mathcal{R}}
\newcommand{\cP}{\mathcal{P}}
\newcommand{\cN}{\mathcal{N}}
\newcommand{\cD}{\mathcal{D}}
\newcommand{\cE}{\mathcal{E}}
\newcommand{\de}{\delta}
\newcommand{\eps}{\varepsilon}
\DeclareMathOperator{\hdim}{dim_H}
\DeclareMathOperator{\lbdim}{\underline{\dim}_B}
\DeclareMathOperator{\supp}{supp}
\DeclareMathOperator{\bad}{\mathbf{Bad}}
\newcommand{\wh}{\widehat}
\newcommand{\wt}{\widetilde}
\newcommand{\M}{\mathbf{M}}
\newcommand{\T}{\mathbf{T}}
\newcommand{\Arrow}[1]{%
\parbox{#1}{\tikz{\draw[->](0,0)--(#1,0);}}
}
\newcommand{\ssto}{\Arrow{.2cm}}
\title[Pinned distance sets]{Improved bounds for the dimensions of planar distance sets}
\author{Pablo Shmerkin}
\address{Department of Mathematics and Statistics, Torcuato Di Tella University, and CONICET, Buenos Aires, Argentina}
\email{pshmerkin@utdt.edu}
\urladdr{http://www.utdt.edu/profesores/pshmerkin}
\thanks{P.S. was partially supported by Projects CONICET-PIP 11220150100355 and PICT 2015-3675 (ANPCyT)}
\subjclass[2010]{Primary: 28A75, 28A80; Secondary: 26A16, 49Q15}
\keywords{distance sets, pinned distance sets, Hausdorff dimension, box counting dimension, Falconer's problem}
\begin{document}

\begin{abstract}
We obtain new lower bounds on the Hausdorff dimension of distance sets and pinned distance sets of planar Borel sets of dimension slightly larger than $1$, improving recent estimates of Keleti and Shmerkin, and of Liu in this regime. In particular, we prove that if $\hdim(A)>1$, then the set of distances spanned by points of $A$ has Hausdorff dimension at least $40/57 > 0.7$ and there are many $y\in A$ such that the pinned distance set $\{ |x-y|:x\in A\}$ has Hausdorff dimension at least $29/42$ and lower box-counting dimension at least $40/57$. We use the approach and many results from the earlier work of
Keleti and Shmerkin, but incorporate estimates from the recent work of Guth, Iosevich, Ou and Wang as additional input.
\end{abstract}

\maketitle

\section{Introduction}

Given $A\subset\R^d$, with $d\ge 2$, its \emph{distance set} is $\Delta(A)=\{|x-y|:x,y\in A\}$. If $y\in\R^d$ is given, we also define the \emph{pinned distance set} $\Delta_y(A)=\{ |x-y|: x\in A\}$. A major open problem in geometric measure theory, introduced by Falconer in \cite{Falconer85}, is whether $|\Delta(A)|>0$ whenever $A$ is a Borel set with $\hdim(A)>d/2$ (we denote Lebesgue measure by $|\cdot|$ and Hausdorff dimension by $\hdim$). A variant asks whether, under the same assumptions, $|\Delta_y(A)|>0$ for some $y\in A$. These problems remain open in all dimensions, but many new partial results have been achieved very recently \cite{Shmerkin17, IosevichLiu17, KeletiShmerkin18, Liu18,DGOWWZ18,DuZhang18, GIOW18, Liu18b}. We review only a small selection of relevant results in the plane.

All sets are assumed to be Borel. In \cite{KeletiShmerkin18}, Keleti and the author proved that if $A$ is a planar set with $\hdim(A)=s\in (1,3/2)$, then
\begin{equation} \label{eq:lower-bound-2/3s}
\hdim(\Delta_y A) \ge \frac{2}{3}s.
\end{equation}
outside of a set of $y$ of Hausdorff dimension $\le 1$ (in particular, for nearly all $y\in A$). In \cite{KeletiShmerkin18} we also proved that if $\hdim(A)>1$ then
\[
\hdim(\Delta A) \ge 2/3 + 1/54.
\]
We also established much better bounds for the \emph{packing} (or upper box-counting) dimension of $\Delta_y A$, as well as for the Hausdorff dimension of $\Delta_y A$ under additional structural assumptions on $A$ (such as an upper bound on its packing or box dimension).

Very recently, Guth, Iosevich, Ou and Wang proved in \cite{GIOW18} that if $A$ is a planar set with $\hdim(A)>5/4$ then there are many $y\in A$ such that $|\Delta_y(A)|>0$. This improves upon a well-known result of Wolff \cite{Wolff99} asserting that if $\hdim(A)>4/3$ then $|\Delta(A)|>0$. In another recent breakthrough, Liu \cite{Liu18} managed to replace $\Delta(A)$ by $\Delta_y(A)$ with $y\in A$ in Wolff's Theorem. Guth, Iosevich, Ou and Wang use Liu's approach as well as an idea of \cite{KeletiShmerkin18}, but also introduce several fundamental new insights.

Even more recently, Liu \cite{Liu18b}, building upon the results of \cite{GIOW18}, proved that if $s\in (1,5/4)$ and $\hdim(A)=s$ then there are many $y\in A$ such that
\begin{equation} \label{eq:lower-bound-Liu}
\hdim(\Delta_y A) \ge \frac{4}{3}s-\frac{2}{3}.
\end{equation}
This improves upon \eqref{eq:lower-bound-2/3s} for all $s>1$. However, if one only assumes $\hdim(A)>1$ (as in Falconer's original problem), both \eqref{eq:lower-bound-2/3s} and \eqref{eq:lower-bound-Liu} give $\hdim(\Delta_y(A))> 2/3$. This is perhaps a bit curious as both proofs rely on very different methods, and suggests that improving upon the $2/3$ is a natural problem. Here we prove:
\begin{theorem} \label{thm:main}
Let
\[
\phi(u) = \frac{29+19 u+6 u^2+8 u^3-8 u^4}{42-15 u+30 u^2-12 u^3}.
\]
If $A$ is a planar Borel set with $\hdim(A)=s\in (1,1.04)$ then there is $y\in A$ such that
\[
\hdim(\Delta_y A) \ge \phi(s-1) > 29/42 = 2/3+1/42 \approx 0.6904\ldots.
\]
In fact, the above holds for all $y\in\R^2$ outside of a set of Hausdorff dimension at most $1$.
\end{theorem}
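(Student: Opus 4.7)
The plan is to follow the multi-scale decomposition approach of Keleti and Shmerkin \cite{KeletiShmerkin18}, substituting the recent quantitative distance-set estimates of Guth-Iosevich-Ou-Wang \cite{GIOW18} for the older Wolff-type input at a key step. Fix $s = 1 + u$ with $u \in (0, 0.04)$ and let $\mu$ be a Frostman measure on $A$ of dimension $s' < s$ very close to $s$. The dimension of $\Delta_y A$ is controlled from below by the $L^q$-spectrum of the pinned distance measure $\nu_y := |\cdot - y|_*\mu$, and the goal is to show that for all $y$ outside of an exceptional set of dimension at most $1$, this quantity is at least $\phi(u)$. Because $\phi(0) = 29/42$, it suffices to prove a sharp quantitative bound and pass to the limit $s' \to s$.

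The key mechanism, inherited from \cite{KeletiShmerkin18}, is a dichotomy. Carry out a multi-scale analysis of $\mu$: at each dyadic scale $\delta = 2^{-k}$, record the concentration of the pinned distance measure $\nu_y$ on balls of radius $\delta$, averaged over $y$. If $\hdim(\Delta_y A)$ is smaller than the conjectured bound for too many $y$'s, then at many scales the measure $\mu$ must concentrate strongly on $\delta$-neighborhoods of unions of tubes emanating from these $y$'s. At such ``bad'' scales, the discretized radial projection / entropy machinery of \cite{KeletiShmerkin18} produces a structural approximation of $\mu$ by a product of a near one-dimensional radial factor and an angular factor. At the remaining ``good'' scales, I would replace the Wolff bisector input by the GIOW estimate: although the direct statement of \cite{GIOW18} requires an ambient dimension above $5/4$, their underlying decoupling and wave-packet analysis yields quantitatively sharper upper bounds on the appropriate thickened $L^2$-norm of $\nu_y$ even at effective dimensions slightly above $1$, and hence a sharper lower bound on the $L^2$-dimension of $\nu_y$.

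Combining both regimes gives a recursive inequality relating $\hdim(\Delta_y A)$ to $u$, the proportion $\rho$ of ``bad'' scales, and the effective dimension loss at each structured scale. Optimizing over $\rho$, exactly as was done in \cite{KeletiShmerkin18} to produce $2/3 + 1/54$, now produces the function $\phi$; the specific rational form with quartic numerator and cubic denominator reflects this optimization together with the polynomial parameters appearing in the discretized Frostman estimates and the GIOW input. Finally, the statement for all $y$ outside a set of dimension at most $1$ follows by the same exceptional-set bound used to establish \eqref{eq:lower-bound-2/3s}, since the argument proceeds uniformly in $y$ modulo a thin exceptional set arising from the radial projection theorem.

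I expect the main obstacle to be the careful localization of the GIOW estimate to the correct scales and subsets. The theorem in \cite{GIOW18} is global and assumes ambient dimension $> 5/4$; what is needed here is a scale-local version that applies to regular subsets of $\mu$ whose effective dimension at the working scale exceeds some threshold only slightly above $1$, while retaining enough wave-packet structure for the decoupling input to bite. Carrying out the pigeonholing of a regular, well-separated subset of $\mu$ and verifying that GIOW's decoupling estimates transfer to this reduced setting without losing the quantitative gain is the delicate technical heart of the argument; the rest of the proof then amounts to bookkeeping and convex optimization.
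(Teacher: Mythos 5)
Your high-level plan---combine the Keleti--Shmerkin multi-scale/branching framework with a quantitative GIOW input in the regime just above dimension $1$---is the strategy the paper takes, and you correctly identify the main technical hurdle: extracting a scale-local, quantitative version of the GIOW estimate valid for $2^{-T\ell}$-discretized measures. The paper does exactly this in Theorem 5.1 and Corollary 5.2 by re-tracking the constants in the proofs of \cite[Propositions 2.1 and 2.2]{GIOW18}.

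However, the middle of your argument misdescribes the mechanism of the dichotomy, and as stated it would not produce the function $\phi$. There is no per-scale ``good/bad'' classification followed by an optimization over a proportion $\rho$ of bad scales; the structural output is not a factorization of $\mu$ into a near-one-dimensional radial factor and an angular factor; and the lower bound is obtained from box-counting numbers $\cN(\Delta_y A,\ell)$ (Proposition 3.6), not from the $L^q$-spectrum of $\Delta_y\mu$. What the paper actually does is fix a regular piece $\rho=\mu_{X_i}$ of $\cR_\ell\mu$ with branching sequence $\sigma\in[-1,1]^\ell$, pass to the associated $1$-Lipschitz function $f$ via Lemma 4.5, and make a \emph{single global dichotomy} on the total drop $\T(f)$. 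If $\T(f)\le 1-\phi+o(1)$, then Proposition 3.6 directly gives $\log\cN(\Delta_y A,\ell)\ge(\phi-o(1))T\ell$. If $\T(f)$ exceeds this threshold, then Corollary 4.4 forces $f(x)\ge\frac{1+u}{3}x-(1-2u)\eta-o(1)$ on $[0,\xi]$, which by Lemma 3.3 gives $\log\cE_{(5-s)/3}(\rho')\le T\ell((1-2u)\eta+o(1))$ for the coarsened measure $\rho'=\cR_{\lfloor\xi\ell\rfloor}\rho$; the quantitative GIOW estimate (Proposition 5.6) applied to $\rho'$ then yields $\log\cN(\Delta_y A,\ell)\ge(\xi-(1-2u)\eta-o(1))T\ell$. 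The parameters $\eta$ and $\xi$ in Corollary 4.4 are chosen precisely so that both branches give $\phi$, and this is where the rational form of $\phi$ comes from. Turning your sketch into a proof would require replacing the ``proportion of bad scales'' heuristic with this total-drop dichotomy and the accompanying energy bound on $\rho'$; that substitution is not cosmetic, as it is the mechanism through which the GIOW $L^2$ estimate becomes effective precisely when the pure multi-scale bound stalls at $2s/3$.
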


This improves upon Liu's lower bound \eqref{eq:lower-bound-Liu} in the interval $(1,1.037)$ (note also that our exceptional set has Hausdorff dimension at most $1$ and so it is much smaller than $A$, while the exceptional set in Liu's approach can be as large as $A$ in terms of dimension). Theorem \ref{thm:main} also improves upon the estimate $\hdim(\Delta A) \ge 2/3+1/54$ for the full distance obtained in \cite{KeletiShmerkin18} under the assumption $\hdim(A)>1$. We are able to obtain a further improvement for the dimension of the full distance set, which also works for the lower box counting dimension of the pinned distance set. Even though for \emph{upper} box dimension or even packing dimension much better estimates are proved in \cite{KeletiShmerkin18}, a lower box dimension bound provides new information as it says the pinned distance sets is large at  \emph{all} small scales, as opposed to only infinitely many small scales.
\begin{theorem} \label{thm:full}
Let
\[
\chi(u)=\frac{8 (1+u) \left(5-3 u+4 u^2\right)}{57-30 u+48 u^2}.
\]
If $A$ is a planar Borel set with $\hdim(A)=s\in (1,1.06)$ then
\[
\hdim(\Delta A) \ge \chi(s-1) > 40/57 = 2/3+2/57 \approx 0.7017\ldots.
\]
Moreover, if $\mathcal{H}^s(A)>0$ then there is $y\in A$ such that
\[
\lbdim(\Delta_y A) \ge \chi(s-1).
\]
\end{theorem}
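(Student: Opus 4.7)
The plan is to adapt the multi-scale induction of Keleti--Shmerkin \cite{KeletiShmerkin18}, incorporating the planar distance-set estimate of Guth--Iosevich--Ou--Wang \cite{GIOW18} as the new input. Write $s=1+u$ with $u\in(0,0.06)$ and fix an $s$-Frostman measure $\mu$ supported on a compact subset of $A$ (which exists by Frostman's lemma for the first assertion, and precisely because $\mathcal{H}^s(A)>0$ for the second). Following the template of \cite{KeletiShmerkin18}, I would decompose $\mu$ scale by scale according to the branching structure of a natural $\delta$-tree, so that each branch has an effective local dimension $t\in[0,2]$ that may vary across scales. On each branch, two distinct lower bounds for the $\delta$-covering number of the pinned distance set are available and are to be combined: the first is the discretized radial-projection bound that gives roughly $\delta^{-2t/3}$, which is the source of the $2/3$ barrier; the second is an $L^2$-based estimate distilled from \cite{GIOW18}, which becomes nontrivial as soon as $t>1$ and strengthens as $t$ grows. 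A convex combination of these two estimates across scales, optimized in the scale weights, produces the numerical function $\chi$; the constraint $s<1.06$ is the range in which this combination beats the pure projection bound.

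For the full distance set assertion $\hdim(\Delta A)\ge\chi(s-1)$, the multi-scale argument yields, at each scale $\delta$, a pin $y_\delta\in A$ (possibly $\delta$-dependent) together with $\gtrsim\delta^{-\chi(s-1)+o(1)}$ $\delta$-separated values in $\Delta_{y_\delta}A\subset \Delta A$. This directly bounds $\lbdim(\Delta A)$ from below, hence also $\hdim(\Delta A)$. For the sharper pinned statement, the essential upgrade is a \emph{quantitative} bad-set bound: for every $\eta>0$ and every dyadic $\delta$, the $\mu$-measure of the set of pins $y$ with $N(\Delta_y A,\delta)<\delta^{-\chi(s-1)+\eta}$ should be at most $\delta^{c(\eta)}$ for some $c(\eta)>0$. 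A Borel--Cantelli argument summed over dyadic $\delta$ then ensures that $\mu$-almost every pin is good for all but finitely many scales, giving $\lbdim(\Delta_y A)\ge\chi(s-1)-\eta$ for such $y$; letting $\eta\to 0$ along a countable sequence concludes.

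The main obstacle I expect lies in the branch-wise combination of bounds: the $L^2$ estimate of \cite{GIOW18} is tailored to dimension close to $5/4$ with a positive-Lebesgue-measure conclusion, so one must extract a flexible discretized inequality that degrades gracefully as the branch dimension $t$ drops toward $1$, and then perform the convex optimization needed to land exactly on the stated $\chi$. A secondary difficulty is the polynomial bad-set estimate required for the Borel--Cantelli step, since $\lbdim$ records behaviour at \emph{every} small scale and cannot tolerate merely qualitative control of the exceptional set of pins; any loss in the quantitative strength of the GIOW input propagates directly to this step.
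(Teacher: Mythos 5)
Your plan for the full-distance-set bound $\hdim(\Delta A)\ge\chi(s-1)$ contains a genuine error. You assert that producing $\gtrsim\delta^{-\chi+o(1)}$ many $\delta$-separated distances at every small scale $\delta$ ``directly bounds $\lbdim(\Delta A)$ from below, hence also $\hdim(\Delta A)$.'' The implication runs the wrong way: for every bounded set $E$ one has $\hdim(E)\le\lbdim(E)\le\ubdim(E)$, so a lower bound on lower box dimension gives no lower bound on Hausdorff dimension. To reach Hausdorff dimension the paper proves a much stronger, \emph{uniform} claim: fixing Frostman measures $\mu_1,\mu_2$ on $A$ with separated supports and a good set $G\subset\supp\mu_1\times\supp\mu_2$, for \emph{every} Borel $H$ with $(\mu_1\times\mu_2)_G(H)>\ell^{-2}$ one has $\log\cN(\Delta(H),\ell)\ge(\chi-o(1))T\ell$. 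Only because the covering-number bound holds over all subsets of appreciable measure can a standard dyadic pigeonholing lemma convert it into $\hdim(\Delta(G))\ge\chi-o(1)$. Your scheme, which exhibits one scale-dependent pin $y_\delta$ and one large collection of distances at each scale, supplies none of this uniformity and cannot close the gap to Hausdorff dimension.

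Two further points. First, both the radial-projection input (Proposition \ref{prop:box-counting} via Orponen's theorem) and the GIOW $L^2$ estimate (Corollary \ref{cor:GIOW}) require \emph{two} measures with supports separated by a fixed gap; you work with a single Frostman measure $\mu$ throughout. For the $\lbdim(\Delta_y A)$ statement this forces an extra device at every scale, which the paper supplies as Lemma \ref{lem:separated-squares}: after restricting $\mu$ to a small-measure bad set $B$ one must extract a pair of separated sub-squares on which the (renormalized) measure retains a controlled Frostman constant, with losses only polynomial in $\ell$. Without this step neither key input is applicable in the box-dimension argument. Second, the bad-set bound you request, $\mu(\{y:\cN(\Delta_y A,\delta)<\delta^{-\chi+\eta}\})\lesssim\delta^{c(\eta)}$, is stronger than what is either attainable or needed; the bound the argument actually delivers is $\lesssim\ell^{-2}$ with $\delta=2^{-T\ell}$, i.e.\ only polylogarithmic in $1/\delta$, which still sums over $\ell$ and suffices. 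Finally, describing the combination of the Keleti--Shmerkin and GIOW estimates as ``a convex combination optimized in the scale weights'' is not quite the mechanism: the proof is a dichotomy on the size of the total drop $\T(f)$ of the associated Lipschitz profile, with the stability result (Corollary \ref{c:2/57}) converting near-extremality of $\T(f)$ into the $5/4$-Frostman and $5/4$-energy control at the intermediate scale $\ell'=\lfloor\xi\ell\rfloor$ that makes the GIOW estimate effective.
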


The lower bound $\chi(s-1)$ is better than that given by \eqref{eq:lower-bound-Liu} for $s\in (1,1.05]$.

We make some brief comments on the proofs. We follow the scheme of \cite{KeletiShmerkin18}. Recall the bound \eqref{eq:lower-bound-2/3s}.
Even though this bound is never better than \eqref{eq:lower-bound-Liu} for general sets, the methods of \cite{KeletiShmerkin18} provide better lower bounds in many cases, depending on the ``branching structure'' of $A$. Let $s$ be slightly larger than $1$. If there are arbitrarily small scales $r$ such that at scale $r$ the set $A$ is a union of $\approx r^{-s}$ squares of side length $r$ which are $\approx r^{-s/2}$-separated, then the methods of \cite{KeletiShmerkin18} do not give anything better than $2s/3$. However, for such well-separated sets, it is not too hard to see that the results of \cite{GIOW18} give a lower bound $\hdim(\Delta_y(A))>4/5$ for many $y\in A$ (this is related to the exponent $4/5$ for discretized well separated sets in \cite[Corollary 1.5]{GIOW18}). In \cite{KeletiShmerkin18} we proved a structural result saying, roughly, that if $\hdim(\Delta_y(A))\le 2s/3+\eta$ then $A$ resembles one of these well-separated sets in a rather technical but quantitative fashion.

The main idea of this paper is to show that the results of \cite{GIOW18} still give a lower bound $4/5-h(\eta)$ for the dimension of the (pinned) distance set of $A$ under the structural information on $A$ derived from $\hdim(\Delta_y(A))\le 2s/3+\eta$.  A similar idea was already used in the proof of the bound $\hdim(\Delta(A))\ge 2/3+1/54$ in \cite{KeletiShmerkin18}; there the estimates of Wolff \cite{Wolff99} were used as additional input. The additional gains in this paper follow from using more powerful quantitative estimates that we extract from \cite{GIOW18}.

Interestingly, both \cite{KeletiShmerkin18} and \cite{GIOW18} rely crucially on a spherical projection theorem of Orponen, \cite[Theorem 1.11]{Orponen18}. Hence the proofs of Theorems \ref{thm:main} and \ref{thm:full} use this theorem \emph{twice} in rather different (although related) forms. It is because of the use of this projection theorem that we need to assume that $\hdim(A)>1$ and we get no results when $\hdim(A)=1$.

We set up notational conventions in Section \ref{sec:notation}. In Sections \ref{sec:KS} and \ref{sec:Lipschitz} we review various results from \cite{KeletiShmerkin18}, while in Section \ref{sec:GIOW} we recall several estimates from \cite{GIOW18} and deduce some useful consequences. Finally, we complete the proofs of Theorems \ref{thm:main} and \ref{thm:full} in Section \ref{sec:proofs}.

\section{Notation}
\label{sec:notation}

We use Landau's $O(\cdot)$ notation: given $X>0$, $O(X)$ denotes a positive quantity bounded above by $C X$ for some constant $C>0$. If $C$ is allowed to depend on some other parameters, these are denoted by subscripts. We sometimes write $X\lesssim Y$ in place of $X=O(Y)$ and likewise with subscripts. We write $X\gtrsim Y$,  $X\approx Y$ to denote $Y\lesssim X$,  $X\lesssim Y\lesssim X$ respectively.

Throughout the rest of the paper, and according to the setup of \cite{KeletiShmerkin18}, we work with three parameters that we assume fixed: a large integer $T$ and small positive numbers $\e,\tau$.  The parameter $T$ indicates the scale we work with: we will decompose sets and measures in the base $2^T$. In particular, we will work with sets and measures that have a regular tree (or Cantor) structure when represented in this base: see Definition \ref{def:regular}. The parameter $\tau$ arises in the set of bad projections from \cite{KeletiShmerkin18} and we do not deal with it directly. Finally, $\e$ will denote a generic small parameter; it can play different roles at different places.

We will use the notation  $o_{T,\e,\tau}(1)=o_{T\to\infty,\e\to 0^+,\tau\to 0^+}(1)$ to denote any function $f(T,\e,\tau)$ such that
\[
f(T,\e,\tau)\ge 0 \quad\text{and}\quad  \lim_{T\to\infty,\e\to 0^+,\tau\to 0^+} f(T,\e,\tau)=0.
\]
If a particular instance of $o(1)$ is independent of some of the variables, we drop these variables from the notation.  Difference instances of the $o(1)$ notation may refer to different functions of $T,\e,\tau$, and they may depend on each other, so long as they can always be made arbitrarily small.

We will often work at a scale $2^{-T\ell}$;  it is useful to think that $\ell\to\infty$ while $T,\e,\tau$ remain fixed.

The family of Borel probability measures on a metric space $X$ is denoted by $\cP(X)$. If $0<\mu(A)<\infty$, then $\mu_A$ denotes the normalized restriction $\mu(A)^{-1}\mu|_A\in\cP(A)$. If $f:X\to Y$ is a Borel map, then by $f\mu$ we denote the push-forward measure, i.e. $f\mu(A)= \mu(f^{-1}A)$.

We let $\cD_j$ be the half-open $2^{-jT}$-dyadic cubes in $\R^d$ (where $d$ is understood from context), and let $\cD_j(x)$ be the only cube in $\cD_j$ containing $x\in \R^d$. Given a measure $\mu\in\cP(\R^d)$, we also let $\cD_j(\mu)$ be the cubes in $\cD_j$ with positive $\mu$-measure. Note that these families depend on $T$. Given $A\subset \R^d$, we also denote by $\cN(A,\ell)$ the number of cubes in $\cD_\ell$ that intersect $A$.

A $2^{-m}$-measure is a measure in $\cP([0,1)^d)$ such that the restriction to any $2^{-m}$-dyadic cube $Q$ is a multiple of Lebesgue measure on $Q$, i.e. a measure defined down to resolution $2^{-m}$. Likewise, a $2^{-m}$-set is a union of $2^{-m}$ dyadic cubes. If $\mu\in\cP(\R^d)$ is an arbitrary measure, then we denote
\[
\cR_\ell(\mu) = \sum_{Q\in\cD_\ell} \mu(Q) \text{Leb}_Q,
\]
that is, $\cR_\ell(\mu)$ is the $2^{-T\ell}$-measure that agrees with $\mu$ on all dyadic cubes of side length $2^{-T\ell}$. We also define the corresponding analog for sets:  given $A\subset\R^d$, $\cR_\ell(A)$ denotes the union of all cubes in $\cD_\ell$ that intersect $A$.

We will sometimes need to deal with supports in the dyadic metric, i.e. given $\mu\in\cP([0,1)^d)$ we let
\[
\supp_{\mathsf{d}}(\mu) = \{ x: \mu(\cD_j(x))>0 \text{ for all } j\in\N\}.
\]
Note that $\mu(\supp_{\mathsf{d}}(\mu))=1$ and that $\supp_{\mathsf{d}}(\mu)\subset \supp(\mu)$.

If a measure $\mu\in\cP(\R^d)$ has a density in $L^p$, then its density is sometimes also denoted by $\mu$, and in particular  $\|\mu\|_{L^p}$ stands for the $L^p$ norm of its density.

Logarithms are always to base $2$.

\section{The Keleti-Shmerkin framework}
\label{sec:KS}

In this section we recall several concepts and results from \cite{KeletiShmerkin18}.

\subsection{Regular measures and energy}

Following \cite{KeletiShmerkin18}, we will decompose a $2^{-T\ell}$-measure in terms of measures which have a uniform tree structure when represented in base $2^T$. This notion is made precise in the next definition.

\begin{definition} \label{def:regular}
Given a sequence $\sigma=(\sigma_1,\ldots,\sigma_{\ell})\in \R^\ell$, we say that $\mu\in \cP([0,1)^d)$ is \emph{$\sigma$-regular} if it is a $2^{-{T\ell}}$-measure, and for any $Q\in \cD_{j}(\mu)$, $1\le j\le\ell$, we have
\[
\mu(Q) \le 2^{-T(\sigma_j+1)} \mu(\wh{Q}) \le 2\mu(Q),
\]
where $\wh{Q}$ is the only cube in $\cD_{j-1}$ containing $Q$.
\end{definition}
The exponent $T(\sigma_j+1)$ is a convenient normalization. The key point in this definition is that a measure is $\sigma$-regular if all cubes of positive mass have roughly the same mass, and the sequence $(\sigma_j)$ quantifies this common mass. We have the following easy estimate for the mass decay of regular measures.
\begin{lemma} \label{lem:regular-mass-decay}
Suppose $\mu\in\cP([0,1)^2)$ is $(\sigma_1,\ldots,\sigma_\ell)$-regular and
\[
\sum_{i=1}^j (\alpha-\sigma_i) \le K \quad j=1,\ldots,\ell.
\]
Then
\[
\mu(B(x,r)) \lesssim_T 2^{K T} r^{1+\alpha} \quad\text{for all }x\in \R^2, r\in (0,1].
\]
\end{lemma}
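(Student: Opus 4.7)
The plan is to prove this by straightforward iteration of the regularity inequality, followed by a case split on the radius $r$ relative to the finest scale $2^{-T\ell}$ at which $\mu$ is defined.

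First, I would iterate the defining inequality of $\sigma$-regularity. For any $j\in\{1,\ldots,\ell\}$ and any $Q\in\cD_j(\mu)$, writing $Q=Q_j\subset Q_{j-1}\subset\cdots\subset Q_0=[0,1)^2$ for its ancestor chain and applying the upper half of the regularity estimate at each step (since each $Q_i$ has positive mass), I obtain
\[
\mu(Q) \le \prod_{i=1}^j 2^{-T(\sigma_i+1)} \mu([0,1)^2) = 2^{-Tj}\cdot 2^{-T\sum_{i=1}^j \sigma_i}.
\]
Now invoke the hypothesis $\sum_{i=1}^j(\alpha-\sigma_i)\le K$, which rearranges to $\sum_{i=1}^j \sigma_i \ge j\alpha-K$. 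Substituting gives the key cube bound
\[
\mu(Q) \le 2^{TK}\cdot 2^{-Tj(1+\alpha)}, \qquad Q\in\cD_j,\ 0\le j\le \ell.
\]

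Next, for a ball $B(x,r)$ with $r\in(0,1]$, I split into two cases. If $r\ge 2^{-T\ell}$, choose the integer $j\in\{0,1,\ldots,\ell\}$ with $2^{-Tj}\le r < 2^{-T(j-1)}$; then $B(x,r)$ meets at most $O_T(1)$ cubes of $\cD_j$, so the cube bound yields
\[
\mu(B(x,r)) \lesssim_T 2^{TK}\cdot 2^{-Tj(1+\alpha)} \lesssim_T 2^{TK} r^{1+\alpha},
\]
using $2^{-Tj}\le r$ in the last step. If instead $r<2^{-T\ell}$, I use that $\mu$ is a $2^{-T\ell}$-measure: on each $Q\in\cD_\ell$ its density is at most $\mu(Q)\cdot 2^{2T\ell}\le 2^{TK}\cdot 2^{-T\ell(1+\alpha)}\cdot 2^{2T\ell}=2^{TK}\cdot 2^{T\ell(1-\alpha)}$, hence
\[
\mu(B(x,r)) \lesssim r^2 \cdot 2^{TK}\cdot 2^{T\ell(1-\alpha)} = 2^{TK} r^{1+\alpha}\cdot (r\cdot 2^{T\ell})^{1-\alpha}\lesssim 2^{TK} r^{1+\alpha},
\]
since $r\cdot 2^{T\ell}\le 1$ (the statement is interesting only for $\alpha\le 1$, which we may assume; otherwise the trivial bound $\mu(B(x,r))\le 1$ already suffices when $K\ge 0$).

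There is no substantive obstacle: the lemma is a mechanical consequence of the regularity definition plus the partial sum hypothesis. The only subtlety worth flagging is handling scales finer than $2^{-T\ell}$ via the Lebesgue-density interpretation of a $2^{-T\ell}$-measure, which is why the factor $2^{TK}$ absorbs the loss uniformly across both regimes.
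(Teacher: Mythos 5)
Your proof is correct and follows essentially the same route as the paper: iterate the regularity inequality to get the cube bound $\mu(Q)\le 2^{KT}2^{-(1+\alpha)jT}$ for $Q\in\cD_j$, then cover $B(x,r)$ by $O_T(1)$ such cubes with $2^{-Tj}\le r$. The paper's one-line proof tacitly assumes $r\ge 2^{-T\ell}$; your extra case using the $2^{-T\ell}$-density to handle $r<2^{-T\ell}$ is a legitimate (and helpful) completion, though your parenthetical about $\alpha>1$ is slightly off since the trivial bound $\mu\le 1$ does not dominate $2^{KT}r^{1+\alpha}$ as $r\to 0$ — but this regime never arises in the paper's applications.
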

\begin{proof}
Let $Q\in \cD_j(\mu)$. By definition of regularity and the assumption
\[
\mu(Q) \le 2^{-T(\sigma_1+\ldots+\sigma_j+j)} \le 2^{K T} 2^{-(1+\alpha)jT}.
\]
Since we can cover a ball $B(x,r)$ by $O_T(1)$ squares $Q\in\cD_j$ with $2^{-jT}\le r$, the claim follows.
\end{proof}

The decomposition we referred to above is detailed in the next proposition; see \cite[Corollary 3.5]{KeletiShmerkin18} for the proof.
\begin{prop} \label{prop:bourgain}
Fix $\ell\ge 1$, write $m=T\ell$, and let $\mu$ be a $2^{-m}$-measure on $[0,1)^2$.  There exists a family of pairwise disjoint $2^{-m}$-sets $X_1,\ldots, X_N$  with $X_i\subset\supp_{\mathsf{d}}(\mu)$, and such that:
\begin{enumerate}[(\rm i)]
\item $\mu\left(\bigcup_{i=1}^N X_i\right) \ge 1-2^{-\e m}$. In particular, if $\mu(A)> 2^{-\e m}$, then there exists $i$ such that $\mu_{X_i}(A)\ge \mu(A)-2^{-\e m}$.
\item $\mu(X_i) \ge 2^{-(\e+\log(2d T+2)/T) m} \ge 2^{-o_{T,\e}(1) m}$ for each $i$.
\item Each $\mu_{X_i}$ is $\sigma(i)$-regular for some $\sigma(i)\in  [-1,1]^\ell$.
\end{enumerate}
\end{prop}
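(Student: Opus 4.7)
My plan is a pigeonhole over a discretized branching profile of each level-$\ell$ cube. For each $R\in\cD_\ell(\mu)$ and each $j\in\{1,\ldots,\ell\}$ I would record two quantities attached to the level-$j$ ancestor $\cD_j(R)$: the integer $\alpha_j(R)\in\{0,1,\ldots,2T\}$ obtained by rounding $-\log_2(\mu(\cD_j(R))/\mu(\cD_{j-1}(R)))$ (with the usual convention to lump very small ratios into the top bin), and a ``surviving sibling count'' $\beta_j(R)\in\{0,1,\ldots,2T\}$ that records, in $\log_2$, the number of level-$j$ children of $\cD_{j-1}(R)$ that belong to the same class as $R$. I would build the partition top-down: at stage $j$, take every current class and subdivide it by the pair $(\alpha_j,\beta_j)$, which makes $\beta_j$ well-defined relative to the class existing at stage $j-1$. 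After $\ell$ stages this produces a partition of $\cD_\ell(\mu)$ into at most $N_0=(2T+1)^{2\ell}=2^{o_T(1)m}$ classes; the corresponding $2^{-m}$-sets $X_i\subset\supp_{\mathsf d}(\mu)$ are the candidates for the conclusion.

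Items~(i) and~(ii) are then immediate pigeonhole: since $\sum_i\mu(X_i)=1$, discarding classes with $\mu(X_i)<2^{-\e m}/N_0$ loses at most $2^{-\e m}$ in total mass, and each surviving class satisfies $\mu(X_i)\ge 2^{-\e m}/N_0=2^{-o_{T,\e}(1)m}$. With a slightly more careful count (e.g.\ binning by the single quantity $\log_2(\mu(Q)/\mu(\wh Q))$ rounded to the nearest integer, which gives roughly $2dT+2$ discretization values per level), this can be sharpened to the explicit $2^{-(\e+\log(2dT+2)/T)m}$ in the statement.

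For~(iii), observe that inside a class $X_i$ the profile $(\alpha_j,\beta_j)_{j=1}^\ell$ is fixed, so every level-$(j-1)$ ancestor of an $X_i$-cube has exactly the same number $n_j:=2^{\beta_j}$ of level-$j$ children in $X_i$, and each of these children has the same downstream profile all the way to level $\ell$. A short induction from level $\ell$ upwards shows that $\mu(Q\cap X_i)$ depends, up to a factor of $2$, only on the level of $Q$, and therefore
\[
\frac{\mu_{X_i}(Q)}{\mu_{X_i}(\wh Q)}=\frac{\mu(Q\cap X_i)}{\mu(\wh Q\cap X_i)}\approx\frac{1}{n_j}=2^{-T(\sigma_j(i)+1)}
\]
with $\sigma_j(i):=\beta_j/T-1\in[-1,1]$, well within the factor-of-$2$ tolerance built into Definition~\ref{def:regular}.

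The main difficulty I expect is the mildly self-referential definition of $\beta_j$: it counts siblings in the class that is itself being constructed, so it must be set up inductively. The top-down iterative refinement above resolves this, and the rest is routine bookkeeping of multiplicative rounding errors compounding over $\ell$ levels, which fit comfortably inside the factor of $2$ allowed by the definition of $\sigma$-regularity.
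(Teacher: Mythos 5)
Your plan for items (i) and (ii) is fine, and the rough shape of (iii) — pigeonhole on a per-level profile that records both a mass-ratio index $\alpha_j$ and a sibling count $\beta_j$ — is in the right spirit. But the top-down iterative refinement does not actually yield $\sigma$-regularity, and this is a genuine gap rather than a bookkeeping issue. The problem is the one you half-identified yourself: $\beta_j$ is only ``well-defined relative to the class existing at stage $j-1$'' (or the $\alpha_j$-refined version of it), whereas the regularity of $\mu_{X_i}$ concerns the class you end up with at stage $\ell$. Refinements at deeper levels $k>j$ can remove \emph{all} level-$\ell$ descendants of some level-$j$ children of a given $Q\in\cD_{j-1}$ while leaving other level-$(j-1)$ cubes untouched, and once that happens the quantity $\mu(Q\cap X_i)$ is no longer even approximately constant across $Q$'s of the same level.

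Here is a concrete failure (take $d=2$, $T=1$, $\ell=3$ for ease; the construction adapts to large $T$). Let $Q_1,Q_2$ be two level-$1$ cubes with $\mu(Q_1)=\mu(Q_2)=1/2$. Let each $Q_i$ have four level-$2$ children $R_{i1},\dots,R_{i4}$, each of mass $1/8$. Now at level $3$: give $R_{11}$ two children of mass $1/16$ each, give $R_{12},R_{13},R_{14}$ a single child of mass $1/8$ each, and give each of $R_{21},\dots,R_{24}$ two children of mass $1/16$ each. Running your construction, $Q_1$ and $Q_2$ both have $\alpha_1=1$, $\beta_1=1$; all eight $R_{ij}$ have $\alpha_2=2$, $\beta_2=2$ (each $Q_i$ has four surviving children at stage $2$); and the class with $\alpha_3=1$, $\beta_3=1$ consists precisely of the two children of $R_{11}$ together with the eight children of $R_{21},\dots,R_{24}$. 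Call it $X$. Then $\mu(Q_1\cap X)=1/8$ while $\mu(Q_2\cap X)=1/2$, a factor of $4$, so $\mu_{X}(Q_1)=1/5$ and $\mu_{X}(Q_2)=4/5$, and there is no $\sigma_1$ with $\mu_X(Q_i)\le 2^{-T(\sigma_1+1)}\le 2\mu_X(Q_i)$ for both $i$. Your class is not $\sigma$-regular for any $\sigma$, and the intermediate claim that $\mu(Q\cap X_i)$ is constant up to a factor of $2$ at each level fails at the base case already (individual $\mu(R)$'s within a fixed $\alpha$-profile can vary by a factor of up to $2^{\ell}$).

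The fix is to run the construction \emph{bottom-up}, from level $\ell$ to level $1$, pruning as you go: once the branching at levels $j+1,\dots,\ell$ has been fixed (and excess children at each stage discarded so the branching is exactly, not approximately, the pigeonholed value), discarding whole level-$j$ cubes at the next stage cannot change the branching pattern inside the surviving ones, so the earlier stages' regularity is preserved. This is essentially the Bourgain tree-regularization argument that \cite{KeletiShmerkin18} invokes; it also takes care of the secondary issues in your sketch (the factor-of-$4$ versus factor-of-$2$ slack, and the incorrect explicit formula $\sigma_j=\beta_j/T-1$, which fails already in two-level examples). As written, the proposal does not prove item (iii).
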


Recall that the $s$-energy of $\mu\in\cP(\R^d)$ is
\[
\cE_s(\mu) = \iint \frac{d\mu(x)d\mu(y)}{|x-y|^s}.
\]
Energies play an important r\^{o}le in all known approaches to the Falconer distance set problem, and also in this paper. The following bound for the $s$-energy of regular measures is proved in \cite[Lemma 3.3]{KeletiShmerkin18}.
\begin{lemma} \label{lem:energy-regular}
If $\nu\in\cP([0,1)^d)$ is $\sigma$-regular for some $\sigma\in\R^\ell$ and $s\in (0,d)$, then
\[
\left|\log \cE_s(\nu)-\left( T \max_{j=1}^{\ell} \sum_{i=1}^j (s-1)-\sigma_j \right)\right|\le O(\ell) + O_{d,s,T}(1).
\]
\end{lemma}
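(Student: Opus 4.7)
The plan is to estimate $\cE_s(\nu)$ both from above and from below by decomposing the integrand $|x-y|^{-s}$ according to dyadic distance scales $2^{-Tj}$, $j=0,1,\ldots,\ell$, and matching the resulting expressions against $M := \max_{j=1}^\ell \sum_{i=1}^j (s-1-\sigma_i)$.

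For the upper bound I would first iterate Definition \ref{def:regular} to obtain $\nu(Q) \le 2^{-T(\sigma_1+\cdots+\sigma_j+j)}$ for every $Q\in\cD_j(\nu)$; since any ball of radius $2^{-T(j-1)}$ is covered by $O_T(1)$ cubes in $\cD_{j-1}$, this yields $\nu(B(x,2^{-T(j-1)})) \lesssim_T 2^{-T(\sigma_1+\cdots+\sigma_{j-1}+(j-1))}$. Then for each fixed $x$,
\[
\int |x-y|^{-s}\,d\nu(y) \le 1 + \sum_{j=1}^{\ell} 2^{sTj}\,\nu\bigl(B(x,2^{-T(j-1)})\bigr) + \int_{|x-y|<2^{-T\ell}} |x-y|^{-s}\,d\nu(y),
\]
and each annular term is controlled by $\lesssim_{s,T} 2^{T\sum_{i=1}^{j-1}(s-1-\sigma_i)}$. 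The innermost integral is handled using that $\nu$ is a $2^{-T\ell}$-measure (hence has $L^\infty$-density bounded by a multiple of $2^{T((d-1)\ell-\sum_i\sigma_i)}$ on each finest cube) combined with the local integrability $\int_{|z|<r}|z|^{-s}\,dz \lesssim_{d,s} r^{d-s}$, which crucially uses the hypothesis $s<d$; this last piece contributes at most $O_{d,s,T}(1)\cdot 2^{T\sum_{i=1}^\ell(s-1-\sigma_i)}$. Integrating in $x$ and collecting terms gives $\cE_s(\nu) \lesssim_{d,s,T} (\ell+1)\max\{1,2^{TM}\}$.

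For the matching lower bound, fix $j\in\{1,\ldots,\ell\}$ and restrict the double integral to pairs $(x,y)$ in a common cube of $\cD_j(\nu)$, so that $|x-y|^{-s}\gtrsim_d 2^{sTj}$. Iterating Definition \ref{def:regular} in the other direction gives $\nu(Q) \ge 2^{-T(\sigma_1+\cdots+\sigma_j+j)-j}$, and since the total mass is $1$ there are at least $2^{T(\sigma_1+\cdots+\sigma_j+j)}$ cubes in $\cD_j(\nu)$; hence
\[
\sum_{Q\in\cD_j(\nu)}\nu(Q)^2 \gtrsim 2^{-T(\sigma_1+\cdots+\sigma_j+j)-2j}
\]
and so $\cE_s(\nu)\gtrsim_d 2^{T\sum_{i=1}^j(s-1-\sigma_i)-2\ell}$. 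Taking the maximum over $j$ yields $\cE_s(\nu)\gtrsim_d 2^{TM-2\ell}$.

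Combining the two estimates and taking logarithms, with $\log(\ell+1) = O(\ell)$ and $2\ell = O(\ell)$, gives $|\log\cE_s(\nu) - TM| \le O(\ell) + O_{d,s,T}(1)$, as required. The most delicate step is the innermost-scale integral $|x-y|<2^{-T\ell}$ in the upper bound, where the dyadic regularity gives no further refinement; the assumption $s<d$ and the fact that $\nu$ is uniform at resolution $2^{-T\ell}$ are both essential to absorb this piece into the $O_{d,s,T}(1)$ constant.
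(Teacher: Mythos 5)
Your proof is correct, and it is the standard dyadic-shell argument that one would expect for a statement of this type. Note that the paper does not actually contain its own proof of this lemma—it cites it as \cite[Lemma 3.3]{KeletiShmerkin18}—so there is nothing in-paper to compare against line by line; but the method you use (decomposing $|x-y|^{-s}$ over annuli $2^{-Tj}\le|x-y|<2^{-T(j-1)}$, using the iterated regularity bounds $2^{-j}2^{-T(\sigma_1+\cdots+\sigma_j+j)}\le\nu(Q)\le 2^{-T(\sigma_1+\cdots+\sigma_j+j)}$ for cubes of positive mass to control the mass of balls and the cardinality of $\cD_j(\nu)$, handling the sub-$2^{-T\ell}$ scales via the $L^\infty$ density bound and the local integrability of $|z|^{-s}$ for $s<d$, and for the lower bound restricting to pairs in a common $\cD_j$-cube) is exactly the natural proof and surely the one used in the cited reference. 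Two cosmetic remarks: (i) the $O_T(1)$ you invoke when covering a ball of radius $2^{-T(j-1)}$ by cubes in $\cD_{j-1}$ is really only $O_d(1)$, though this does no harm; (ii) in the lower bound, the cardinality estimate $\#\cD_j(\nu)\ge 2^{T(\sigma_1+\cdots+\sigma_j+j)}$ uses the \emph{upper} bound on $\nu(Q)$ (not the lower bound you just stated), but you did establish that bound in the first half, so the step is justified.
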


\subsection{Box-counting estimates for pinned distance sets}

The goal of this section is to combine several results from \cite{KeletiShmerkin18} to obtain a box-counting estimate for sets $\Delta_y(A)$ in terms of certain combinatorial information about a Frostman measure $\mu$ supported on $A$.

We recall several definitions from \cite[Section 4]{KeletiShmerkin18}. They will not be directly used in this paper, but we include them for completeness.
\begin{definition} \label{def:integer-partition}
 Given $L\in\N$, a \emph{good partition} of $(0,L]$ is an integer sequence $0=N_0<\ldots<N_q=L$ such that $N_{j+1}-N_j\le N_j+1$. If additionally
\begin{equation}\label{e:integertau}
\tau N_j \le N_{j+1}-N_j \le N_j+1
\end{equation}
then $(N_i)$ is said to be a \emph{$\tau$-good partition}.

Given a finite sequence $(\sigma_1,\ldots,\sigma_L)\in \R^L$, let
\[
\mathcal{S}(\sigma) = -\min_{j=0}^L \sigma_1+\cdots+\sigma_j \ge 0.
\]
For any good partition $\mathcal{P}=(N_j)_{j=0}^q$ of $(0,L]$ and any $\sigma \in \R^L$, we denote
\[
\M(\sigma,\mathcal{P}) = \sum_{j=0}^{q-1} \mathcal{S}(\sigma|(N_j,N_{j+1}]),
\]
where $\sigma|I$ denotes the restriction of the sequence $\sigma$ to the interval $I$. Finally, given $\sigma\in \R^L$ and $\tau\in (0,1)$, we let
\[
\M_\tau(\sigma) = \min \{ \M(\sigma,\mathcal{P}) : \mathcal{P} \text{ is a $\tau$-good partition of } (0,L]\}.
\]
\end{definition}

We write $\Delta_y(x)=|x-y|$ for the pinned distance map.  Recall that $o_{T,\e}(1)$ denotes a function of $T$ and $\e$ which tends to $0$ as $T\to\infty,\e\to 0^+$, and that $\cN(B,\ell)$ is the box-counting number of $B$ at scale $2^{-T\ell}$.

\begin{prop} \label{prop:box-counting}
Let $\mu,\nu\in\cP([0,1)^2)$ have disjoint supports and satisfy $\cE_s(\mu), \cE_u(\nu)<\infty$ for some $s\in (0,2), u>\max(1,2-s)$. There exists a set $G\subset\supp\mu\times\supp\nu$ with $(\mu\times\nu)(G)\ge 2/3$ such that the following holds for $\ell_0$ sufficiently large in terms of $\mu,\nu,T,\e$.

Suppose $\ell\ge\ell_0$. Let $\rho=\mu_{X_i}$, where $X_i$ is one of the sets given by Proposition \ref{prop:bourgain} applied to $\cR_\ell\mu$. If $y\in\supp(\nu)$ and $A\subset\supp(\cR_\ell\mu)$ satisfy
\[
A\subset  \cR_\ell\{ x: (x,y)\in G\}\quad\text{and}\quad \rho(A)\ge\ell^{-2}/2,
\]
then
\[
\frac{\log \cN(\Delta_y A,\ell)}{T\ell} \ge 1 - \frac{\M_\tau(\sigma)}{\ell} -  o_{T,\e,\tau}(1).
\]
\end{prop}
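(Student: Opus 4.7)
The plan is to assemble Proposition \ref{prop:box-counting} by combining three ingredients that are already available in \cite{KeletiShmerkin18}: a spherical projection / energy bound based on Orponen's theorem (which yields the good set $G$), the tree decomposition of Proposition \ref{prop:bourgain}, and a multi-scale $L^2$ argument that converts energy bounds for pushforwards of $\rho$ under $\Delta_y$ into box-counting lower bounds. The quantity $\M_\tau(\sigma)$ is engineered so that the worst-case ``loss'' on each block $(N_j,N_{j+1}]$ of a $\tau$-good partition equals $\mathcal{S}(\sigma|(N_j,N_{j+1}])$, and summing over blocks produces precisely $\M(\sigma,\cP)$.

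First I would define the good set $G$. For $y \in \supp(\nu)$ let $\theta_y(x)=(x-y)/|x-y|$ denote the spherical projection from $y$. The hypothesis $u>\max(1,2-s)$ is exactly what is needed in Orponen's spherical projection theorem (in its energy-theoretic formulation used in \cite{KeletiShmerkin18}) to produce a set $G\subset\supp(\mu)\times\supp(\nu)$ with $(\mu\times\nu)(G)\ge 2/3$ and the property that, for each $(x_0,y)\in G$, the spherical projection of a small localization of $\mu$ around $x_0$ from $y$ has controlled $s$-energy. This is precisely the ``good pair'' notion from \cite[Section 5]{KeletiShmerkin18}, and I would invoke it directly. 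Then I would apply Proposition \ref{prop:bourgain} to $\cR_\ell\mu$ to get the regular piece $\rho=\mu_{X_i}$ with regularity vector $\sigma\in[-1,1]^\ell$; the hypothesis $\rho(A)\ge \ell^{-2}/2$ then means that the normalized restriction $\rho_A$ differs from $\rho$ only by a factor $O(\ell^2)$, which is subpolynomial and absorbable in $o_{T,\e,\tau}(1)$.

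The core of the argument is then a scale-by-scale $L^2$ decrement. Fix a $\tau$-good partition $\cP=(N_j)_{j=0}^q$ of $(0,\ell]$. At each step $j$, consider a typical conditional piece of $\rho_A$ on a generation-$N_j$ cube, rescaled to unit size, and push it forward under $\Delta_y$. The $(x_0,y)\in G$ hypothesis upgrades the regularity of $\rho$ restricted to this cube to an $L^2$ energy bound for the pushforward measure on the annulus of radii around $|x-y|$; Plancherel/Cauchy--Schwarz arguments as in \cite[Section 6]{KeletiShmerkin18} then show that the support of this pushforward occupies at least
\[
2^{T(N_{j+1}-N_j)}\cdot 2^{-T\,\mathcal{S}(\sigma|(N_j,N_{j+1}])-o(T(N_{j+1}-N_j))}
\]
many distance intervals of length $2^{-T N_{j+1}}$. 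Multiplying over the $q$ blocks (this is the step where the $\tau$-good condition \eqref{e:integertau} ensures we do not lose pigeonhole factors that cannot be absorbed), taking logarithms, and dividing by $T\ell$ yields
\[
\frac{\log\cN(\Delta_y A,\ell)}{T\ell}\ \ge\ 1-\frac{\M(\sigma,\cP)}{\ell}-o_{T,\e,\tau}(1).
\]
Optimizing over $\tau$-good partitions $\cP$ replaces $\M(\sigma,\cP)$ by $\M_\tau(\sigma)$ and gives the claim.

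I expect the main obstacle to be verifying the iterated $L^2$ bound: at each block one must check that conditioning $\rho$ on a generation-$N_j$ cube preserves both the regularity hypothesis (with the restricted vector $\sigma|(N_j,N_{j+1}]$) and the ``good pair'' condition $(x_0,y)\in G$ after rescaling, and that the error terms from pigeonholing, from the $\ell^{-2}$ factor in $\rho(A)$, and from the absolute constants in Lemmas \ref{lem:regular-mass-decay} and \ref{lem:energy-regular} combine into a single $o_{T,\e,\tau}(1)$. This bookkeeping is essentially done in \cite{KeletiShmerkin18}, so the proof amounts to identifying the correct statements there and checking that the hypotheses of Proposition \ref{prop:box-counting} match their inputs; the substantive content is already present in that paper.
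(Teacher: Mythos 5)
Your outline matches the paper's proof in structure: the good set $G$ is built from Orponen's radial projection theorem via the ``bad projection'' machinery of \cite{KeletiShmerkin18} (Lemma 3.10 and Proposition 3.12 there), the tree decomposition is Proposition \ref{prop:bourgain}, and the box-counting lower bound in terms of $\M_\tau(\sigma)$ is exactly \cite[Proposition 4.4]{KeletiShmerkin18}, so, as you say, the proof is a short chain of citations. Two framing imprecisions, neither fatal to the logic: the paper's $G$ is defined directly as $\{(x,y): P_y(x)\notin\bad''_{\ell_0}(\mu,x)\}$ (with the threshold $\kappa$ from Proposition 3.12) rather than via an energy bound on a localized spherical projection, and the relevant per-block mechanism in \cite[Proposition 4.4]{KeletiShmerkin18} is a radial-projection entropy/counting estimate rather than a Plancherel/$L^2$ argument; also, the key technical step you flag as a potential obstacle -- propagating the good-pair property to the rescaled conditionals on generation-$N_j$ cubes -- is handled in the paper by the chain of inclusions $\bad''_{\ell_0}\supset\bad'_{\e\ell\sto\ell}(\cR_\ell\mu,\cdot)=\bad_{\e\ell\sto\ell}(\rho,\cdot)$, which is precisely what makes the hypotheses of Proposition 4.4 applicable to $\rho$ and $A$ with $\beta=\e$.
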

\begin{proof}
The proof uses the sets of bad projections defined in \cite[\S 3.2]{KeletiShmerkin18}. We do not repeat the definitions here, but recall from \cite[Lemma 3.10]{KeletiShmerkin18} that
\[
|\bad''_{\ell_0}(\mu,x)| \lesssim_{T,\e,\tau} 2^{-\e' \ell_0}
\]
for all $x\in\supp_{\mathsf{d}}(\mu)$, where $\e'=\e'(T,\e,\tau)>0$.

Now let $\kappa=\kappa(\mu,\nu)>0$ be the number given in \cite[Proposition 3.12]{KeletiShmerkin18} (this is where the assumptions on $\mu$ and $\nu$ get used; Orponen's spherical projection theorem plays a crucial r\^{o}le here). By taking $\ell_0$ large enough we may assume that
\[
|\bad''_{\ell_0}(\mu,x)| \le \kappa
\]
for all $x\in\supp_{\mathsf{d}}(\mu)$. It follows from \cite[Proposition 3.12]{KeletiShmerkin18} that $(\mu\times\nu)(G)\ge 2/3$, where
\[
G= \{ (x,y): P_y(x) \not\in \bad''_{\ell_0}(\mu,x) \}.
\]
Now let $y$ and $A$ be as in the statement. Then for all $x\in A$ there is $\wt{x}\in \cD_\ell(x)$ such that $(\wt{x},y)\in G$ and therefore
\begin{equation} \label{eq:good-projection}
P_y(\wt{x}) \notin \bad''_{\ell_0}(\mu,\wt{x}).
\end{equation}
According to the definition of the sets $\bad'_{\ell_0\ssto\ell}(\cR_\ell\mu,x)$ and $\bad''_{\ell_0}(\mu,x)$ in \cite[Eqs. (3.3) and (3.4)]{KeletiShmerkin18}, we have $P_y(\wt{x})\notin \bad'_{\e\ell\ssto\ell}(\cR_\ell\mu,\wt{x})=\bad_{\e\ell\ssto\ell}(\rho,\wt{x})$.

The hypotheses of \cite[Proposition 4.4]{KeletiShmerkin18} are met by $\rho$ and $A$, with $\beta=\eps$. If $\ell_0$ is taken large enough in terms of $T,\e,\tau$ we can make the error term in the proposition equal to $o_{T,\e}(1)$. The claim follows from an application of \cite[Proposition 4.4]{KeletiShmerkin18}.
\end{proof}

\section{Combinatorics of $1$-Lipschitz functions}
\label{sec:Lipschitz}

We next recall some results from \cite{KeletiShmerkin18} that will help us deal with the numbers $\M_\tau(\sigma)$ from the last section. It turns out to be convenient to work with $1$-Lipschitz functions instead of $[-1,1]$-sequences; see Lemma \ref{l:sigmatof} at the end of this section for the connection between functions and sequences. The following provides an analog to Definition \ref{def:integer-partition} for Lipschitz functions (we will not directly use this definition).
\begin{definition}\label{d:many}
A sequence $(a_n)_{n=0}^\infty$ is a \emph{partition} of
the interval $[0,a]$ if $a=a_0>a_1>\ldots>0$ and $a_n\to 0$;
it is a \emph{good partition} if
we also have $a_{k-1} / a_{k} \le 2$ for every $k\ge 1$.

Let $f:[0,a]\to\R$ be continuous and $(a_n)$ be a  partition of $[0,a]$. By the
  \emph{total drop of} $f$ \emph{according to} $(a_n)$ we mean
\[
\T(f,(a_n))=\sum_{n=1}^\infty f(a_n)-\min_{[a_n,a_{n-1}]} f,
\]
and we also introduce the notation
\[
\T(f)=\inf\{\ \T(f,(a_n))\ :\ (a_n) \textrm{ is a good partition of } [0,a]\ \},
\]
\end{definition}

Although we will not use it directly, we recall \cite[Proposition 5.2]{KeletiShmerkin18} (or rather the special case in which $a=1$ and $C=1$).
\begin{prop}\label{p:basic}
Let
$u\in [0,1/2]$ be a parameter.
  Let $f:[0,1]\to\R$ be a $1$-Lipschitz function such that
  $f(x)\ge ux$ for every $x\in[0,1]$.
Then
\[
    \T(f)\le \frac{1-2u}{3}.
\]
\end{prop}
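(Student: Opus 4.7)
The goal is to exhibit, for every $1$-Lipschitz $f:[0,1]\to\R$ satisfying $f(x)\ge ux$, a good partition $(a_n)$ of $[0,1]$ realizing $\T(f,(a_n))\le (1-2u)/3$. First I would record two complementary pointwise estimates for the local drop $d_n:=f(a_n)-\min_{[a_n,a_{n-1}]} f$. The $1$-Lipschitz hypothesis immediately gives $d_n\le a_{n-1}-a_n$. On the other hand, if $x^*\in[a_n,a_{n-1}]$ is a minimizer then $f(x^*)\ge ux^*\ge ua_n$, which yields $d_n\le f(a_n)-ua_n$; combining this with $|f(a_n)-f(x^*)|\le x^*-a_n$ refines the bound to $d_n\le (f(a_n)-ua_n)/(1+u)$ when the envelope estimate is binding. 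The first bound dominates when the interval is short, the second when $f$ sits well above the envelope $y=ux$.

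Second, I would choose the partition adaptively rather than using the na\"\i ve $a_n=2^{-n}$, since the Lipschitz bound alone then gives only $\T(f)\le 1$. A natural strategy is: given $a_{n-1}$, pick $a_n\in[a_{n-1}/2,a_{n-1}]$ at (an approximation to) a minimizer of $f$ on that subinterval. By $1$-Lipschitz continuity, the minimum of $f$ on $[a_n,a_{n-1}]$ then agrees with $f(a_n)$ up to a controlled error, and each $d_n$ is governed by the envelope bound. Setting $g:=f-u\cdot\mathrm{id}\ge 0$, which is $(1+u)$-Lipschitz, the drops can be related to the values $g(a_n)$ and summed in a partly telescoping manner, with the excesses absorbed by the Lipschitz slack of $g$.

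Third, to extract the sharp constant $(1-2u)/3$, I would partition the indices into a ``Lipschitz'' regime (where $d_n\le a_{n-1}-a_n$ is used) and an ``envelope'' regime (where $d_n\le (g(a_n))/(1+u)$ is used), optimize the breakpoint, and invoke the global constraints $f(0)\ge 0$ and $f(1)\ge u$ to tie the two sums together. The factor $1/3$ should emerge from a small optimization in the partition ratio and the proportion of indices in each regime; the extremal configuration is expected to be a sawtooth $f$ of slopes $\pm 1$ that remains tangent to $y=ux$ at a sequence of dyadic scales, which saturates the bound. The \textbf{main obstacle} is precisely this: obtaining $(1-2u)/3$ rather than a weaker value such as $1$ or $1/2$. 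Single-regime arguments do not suffice, and the delicate step is showing that the adaptive construction together with the combined pointwise bounds is genuinely optimal against the extremizer, so that the two hypotheses interact to produce the constant $1/3$ rather than cancelling in a weaker way.
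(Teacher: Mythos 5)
Your two pointwise bounds $d_n\le a_{n-1}-a_n$ and $d_n\le(f(a_n)-ua_n)/(1+u)$ are correct, but the construction in your step 2 fails already on the extremal example given right after the proposition. Take $u=0$ and the tent $f(x)=\min(x,1-x)$, for which $\T(f)=1/3$. On $[1/2,1]$ the function $f$ is strictly decreasing, so the minimizer of $f$ on $[a_{n-1}/2,a_{n-1}]$ sits at the right endpoint $a_{n-1}$ for every such step; cutting at or arbitrarily near the minimizer forces cuts at a fixed ratio close to $1$, and the drops then telescope along the decreasing branch to give $\T(f,(a_n))\to 1/2$ as the cuts are pushed toward the minimizer. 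The optimal first cut is instead $a_1=2/3$, where $f(2/3)=1/3$ is neither small nor a local minimizer: one trades the positive drop $d_1=1/3$ for zero drops on the halving tail $a_n=(2/3)2^{-(n-1)}$. Identifying and quantifying that trade-off \emph{is} the content of the constant $(1-2u)/3$, and nothing in your step 2 produces it; moreover, your guessed extremizer (a sawtooth tangent to $y=ux$ at many dyadic scales) is not correct --- the true extremizer is the single tent $f(x)=\min(x,1+u-x)$, which touches $y=ux$ only at the endpoints. You concede the optimization in step 3 is not carried out, but the gap is deeper, because the specific adaptive partition you propose provably underperforms even the naive dyadic one on the example that saturates the bound.

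There is also a missing hypothesis that any correct argument must use and your plan does not. With only $f\ge u\cdot\mathrm{id}$ and $1$-Lipschitz the statement is false: for $u=0$ and $f(x)=|x-1/2|$, \emph{every} good partition of $[0,1]$ gives $\T(f,(a_n))=1/2>1/3$ (the drops on the increasing branch vanish, the straddle interval contributes $1/2-a_M$, and the decreasing branch telescopes to $a_M$). The intended extra hypothesis is $f(0)=0$, equivalently $f(x)\le x$; this is the role of the constant $C=1$ in \cite[Proposition 5.2]{KeletiShmerkin18} and it is present in Proposition \ref{p:stability}. Your sketch only invokes the lower envelope $f\ge u\cdot\mathrm{id}$ together with the Lipschitz constraint, so it cannot yield the stated bound even in principle.
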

This proposition is sharp: if
\[
f(x) =  \left\{ \begin{array}{ccc}
          x & \text{ if } & x\in [0,(u+1)/2] \\
          1+u-x & \text{ if } & x\in [(u+1)/2,1]
        \end{array} \right.,
\]
then $\T(f)=(1-2u)/3$. This estimate leads to the bound $\hdim(\Delta_y(A))\ge 2s/3$ for some $y\in A$ if $\hdim(A)=s>1$. In order to improve upon this, we need another result from \cite{KeletiShmerkin18} that asserts that if $\T(f)$ is close to $(1-2u)/3$ then $f$ is close to the above example in a quantitative way.

\begin{prop}\label{p:stability}
Fix $u\in[0,1/3]$, $\eta\in(0,1/21]$ and let
$f:[0,1]\to\R$ be a $1$-Lipschitz function
such that $f(0)=0$, $f(x)\ge u x$ on $[0,1]$ and
$\T(f) > \frac{1-2u}{3}-\eta$.
Let
\begin{equation} \label{eq:t_1}
t_1= \frac{1+u}{3} - \eta\left(\frac{1+u}{1-2u}-(1-u)\right).
\end{equation}
Then
\[
f(x) \ge \left\{
\begin{array}{ll}
  ux & \text{ on } [0,3\eta] \\
  x-3\eta(1-u) & \text{ on } [3\eta,t_1] \\
 t_1-3\eta(1-u) & \text{ on } [t_1,2t_1-6\eta(1-u)]\\
  3t_1 -9\eta(1-u) -x &\text{ on } [2t_1-6\eta(1-u),2t_1-3\eta(1-u)]\\
   x-t_1-3\eta(1-u) & \text{ on } [2t_1-3\eta(1-u),2t_1]\\
  3t_1-x - 3\eta(1-u) & \text{ on } [2t_1,1]
\end{array}
\right..
\]
\end{prop}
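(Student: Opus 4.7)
The plan is to argue by contrapositive. Assume that at some point $x_0$ the value $f(x_0)$ is strictly less than the piecewise lower bound asserted by the proposition. Using only that $f$ is $1$-Lipschitz with $f(0)=0$ and $f(x)\ge ux$, I would construct an explicit good partition $(a_n)$ of $[0,1]$ such that $\T(f,(a_n)) \le \frac{1-2u}{3}-\eta$, thereby contradicting $\T(f) > \frac{1-2u}{3}-\eta$. This is the standard strategy for stability versions of extremal problems, and it refines the proof of the sharp bound in Proposition \ref{p:basic}.

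I would begin by isolating the extremal profile associated with Proposition \ref{p:basic}. When $\eta=0$, one has $t_1=(1+u)/3$, and the claimed lower bound degenerates to the trapezoidal shape
\[
\Phi_0(x) = \min\bigl(x,\ t_1,\ 3t_1 - x\bigr)\quad\text{for }x\in[0,1],
\]
with vertices $(0,0)$, $(t_1,t_1)$, $(2t_1,t_1)$, $(1,u)$. The good partition $a_0=1$, $a_1=2t_1$, $a_k=t_1\cdot 2^{2-k}$ for $k\ge 2$ gives $\T(\Phi_0,(a_n))=(1-2u)/3$, with the full drop realized on $[2t_1,1]$. For $\eta>0$, the piecewise profile in the conclusion is an $\eta$-perturbation of this trapezoid: it is shifted down by $3\eta(1-u)$ on the ascending slope and plateau, and the plateau is further allowed to ``dip'' by another $3\eta(1-u)$ on a window of width $6\eta(1-u)$. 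The specific value of $t_1$ in \eqref{eq:t_1} is precisely the threshold at which the corresponding partition still gives total drop exactly $\frac{1-2u}{3}-\eta$ on the perturbed profile; this is why the correction involves the factor $(1+u)/(1-2u)-(1-u)$.

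Given this setup I would proceed by case analysis over the six sub-intervals. In each case, a deficit $f(x_0)<\text{(claimed bound)}$ propagates by the $1$-Lipschitz property to a deficit of the same order on a surrounding window of length $\gtrsim \eta$. I would then modify the extremal partition above either by displacing $a_1$ (taking advantage of $f(x_0)$ appearing as a smaller value of the minimum on $[a_1,1]$) or by inserting an extra pair of breakpoints near $x_0$ (turning the deficit into an extra saving on one of the nested intervals), in each case producing total drop $\le \frac{1-2u}{3}-\eta$. The constraint $a_{k-1}/a_k\le 2$ has to be checked against the sizes $t_1$, $2t_1$, $2t_1-3\eta(1-u)$ and $2t_1-6\eta(1-u)$; the hypothesis $\eta\le 1/21$ likely arises precisely so that these ratios remain admissible for all $u\in[0,1/3]$.

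The main difficulty is the case of the V-shaped dip in the middle of the plateau. There both the depth (first order, $\approx 3\eta(1-u)$) and the width (also first order, $\approx 6\eta(1-u)$) are constrained, and the saving inside the drop of the plateau interval is a second-order interplay of the two. One has to choose two close breakpoints inside the plateau and quantify the resulting nonzero drop, with the Lipschitz property converting a violation of either the depth or the width constraint into an extra contribution exceeding $\eta$. Verifying that the six cases collectively cover every way $f$ can fail the conclusion, and that each candidate partition is a valid good partition, is the main bookkeeping burden; once this is organized the proposition follows by collecting the six contradictions.
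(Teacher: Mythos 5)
Your proposal takes a genuinely different route from the paper's, which does almost no work: it simply cites the stability result \cite[Proposition 5.15]{KeletiShmerkin18}, which under the present hypotheses gives $f(x)\ge x-3\eta(1-u)$ on $[0,t_1]$, $f(x)\ge t_1-3\eta(1-u)$ on $[t_1,2t_1-6\eta(1-u)]$, and $f(x)\ge 3t_1-x-3\eta(1-u)$ on $[2t_1,1]$, and then observes that the bound on $[0,3\eta]$ is just the hypothesis $f\ge ux$ (which is stronger than $x-3\eta(1-u)$ there), while the two ``V-shape'' pieces on $[2t_1-6\eta(1-u),2t_1]$ follow immediately by applying the $1$-Lipschitz estimate from the endpoints $2t_1-6\eta(1-u)$ and $2t_1$. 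You instead set out to reprove everything from scratch by a contrapositive, partition-construction argument. That is a legitimate, self-contained strategy, and it is indeed the kind of argument that underlies the cited \cite[Proposition 5.15]{KeletiShmerkin18}.

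However, what you have written is a plan rather than a proof, and the gap coincides exactly with the substance of the statement. All of the content lies in establishing the three bounds cited from \cite{KeletiShmerkin18} --- the ascending slope, the plateau, and the descending slope --- and this is precisely what you defer as ``the main bookkeeping burden'': constructing, for each hypothetical deficit $f(x_0)<\text{bound}$, an explicit good partition, checking the ratio constraint $a_{k-1}/a_k\le 2$ (where the hypothesis $\eta\le 1/21$ indeed enters), and quantifying the resulting drop. None of that is carried out, and the specific constants $3\eta$, $3\eta(1-u)$, and the exact form of $t_1$ in \eqref{eq:t_1} cannot be recovered without it. Your framing is also somewhat misdirected: you single out the V-shaped dip on $[2t_1-6\eta(1-u),2t_1]$ as ``the main difficulty'' with a ``second-order interplay'' of depth and width, but that piece is in fact the trivial part. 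Once $f\ge t_1-3\eta(1-u)$ holds at the two endpoints $2t_1-6\eta(1-u)$ and $2t_1$, the V-shape is exactly the $1$-Lipschitz envelope from those two points and needs no partition argument at all; the hard work is the plateau and slope bounds, which the paper outsources.
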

\begin{proof}
It is proved in \cite[Proposition 5.15]{KeletiShmerkin18} that under our assumptions
\[
f(x) \ge \left\{
\begin{array}{ll}
  x-3\eta(1-u) & \text{ on } [0,t_1] \\
 t_1-3\eta(1-u) & \text{ on } [t_1,2t_1-6\eta(1-u)]\\
  3t_1-x - 3\eta(1-u) & \text{ on } [2t_1,1]
\end{array}
\right..
\]
Using the assumption $f(x)\ge ux$, Proposition \ref{p:stability} and the $1$-Lipschitz property of $f$, we get the estimates on the remaining intervals.
\end{proof}

The following corollary will be used in the proof of Theorem \ref{thm:main}. Its deduction from Proposition \ref{p:stability} is very similar to the proof of \cite[Corollary 5.17]{KeletiShmerkin18}.

\begin{corollary} \label{c:1/42}
Fix $u\in [0,1/25]$. Let $f:[0,1]\to\R$ be a $1$-Lipschitz function such that $f(0)=0$ and $f(x)\ge u x$ for all $x\in [0,1]$. Let
\begin{align*}
\eta &= \frac{1+u-4 u^2-4 u^3}{42-15 u+33 u^2-21 u^3+6 u^4} = \phi(u) - \frac{2(1+u)}{3},\\
\xi &= \frac{3 \left(1-u-2 u^2-   \eta(2+7u-5u^2+2u^3)\right)}{(4-u) (1-2 u)} \in (2/3,1).
\end{align*}
Then either $\T(f)\le (1-2u)/3-\eta$ or
\[
f(x) \ge \frac{1-u}{3} x - (1-2u) \eta \quad\text{on } [0,\xi].
\]
\end{corollary}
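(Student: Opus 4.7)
The plan is to dichotomize on whether $\T(f) \le (1-2u)/3 - \eta$ holds. If it does, the first alternative of the conclusion is met and there is nothing more to prove. Otherwise, I would verify that the hypotheses of Proposition~\ref{p:stability} are satisfied: $u \in [0,1/25] \subset [0,1/3]$, the explicit form of $\eta$ lies in $(0,1/21]$ for $u$ in this range, and $f(0)=0$ together with $f(x) \ge ux$ are given. Applying Proposition~\ref{p:stability} then yields the six-piece affine lower bound for $f$ on $[0,1]$, with break points $0$, $3\eta$, $t_1$, $2t_1-6\eta(1-u)$, $2t_1-3\eta(1-u)$, $2t_1$, $1$, and the task reduces to showing that this piecewise bound dominates the target line $L(x) := \frac{1-u}{3} x - (1-2u)\eta$ on $[0,\xi]$.

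Since every piece is affine and so is $L$, the inequality $f \ge L$ on each piece reduces to checking the two endpoints. Pieces~1 (slope $u$) and~2 (slope $1$) are immediate given $u,\eta \ge 0$; piece~3 (constant at height $t_1 - 3\eta(1-u)$) is handled by using that $L$ is increasing and evaluating at the right endpoint. Pieces~4 and~5 form the valley of the bound, whose minimum $t_1-6\eta(1-u)$ is attained at $x_0 = 2t_1-3\eta(1-u)$; the required inequality $t_1-6\eta(1-u) \ge L(x_0)$ becomes, after substituting \eqref{eq:t_1} and clearing the $(1-2u)$ denominator, a polynomial inequality in $u$ and $\eta$. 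The specific $\eta$ in the statement --- whose denominator $42-15u+30u^2-12u^3$ is chosen precisely for this --- makes this inequality valid across $u \in [0,1/25]$.

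Piece~6 (the slope-$(-1)$ descent $3t_1 - x - 3\eta(1-u)$ on $[2t_1,1]$) provides the binding constraint on $\xi$. Solving $3t_1 - \xi - 3\eta(1-u) = L(\xi)$ for $\xi$, substituting the expression for $t_1$ from \eqref{eq:t_1}, and simplifying produces the formula for $\xi$ in the statement; that $\xi \in (2/3,1)$ is then an elementary verification using the explicit value of $\eta$ and the allowed range of $u$.

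The principal obstacle is algebraic bookkeeping --- six cases with nontrivial rational expressions involving $t_1$ --- rather than anything conceptual. The argument closely parallels that of \cite[Corollary 5.17]{KeletiShmerkin18}; the new feature here is a sharper target line $L$, corresponding to the improved exponent $\phi(s-1)$ in Theorem~\ref{thm:main}, together with the tighter restriction $u \in [0,1/25]$ needed to keep this sharper line below the stability envelope throughout $[0,\xi]$.
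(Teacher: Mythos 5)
Your proposal is correct and takes essentially the same approach as the paper: dichotomize on $\T(f)$, invoke Proposition~\ref{p:stability}, and verify that the resulting piecewise-affine envelope dominates the target line by checking at finitely many breakpoints, with $\xi$ defined to make the slope-$(-1)$ descent meet the target line exactly. The paper streamlines the casework by noting (via the slope comparison) that only three checkpoints are needed, namely $x=3\eta$, $x=2t_1-3\eta(1-u)$, and $x=\xi$, but the logic is identical to yours.
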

\begin{proof}
Assume $\T(f)\le (1-2u)/3-\eta$. By Proposition \ref{p:stability}, and comparing slopes, it is enough to prove the desired inequality at $x=3\eta, x=2t_1-3\eta(1-u)$ and $x=\xi$, where $t_1$ is given by \eqref{eq:t_1}. Recalling that $u\in [0,1/25]$ we can check that $\eta<1/40$, which in turn gives $t_1\ge 1/3$. We can also see that $\xi>2t_1$. Using Proposition \ref{p:stability} we can then verify that, indeed,
\begin{align*}
f(3\eta) &\ge 3\eta u = \frac{1-u}{3} (3\eta) - (1-2u)\eta,\\
f(2t_1-3\eta(1-u)) & \ge t_1-6\eta(1-u) > \frac{1-u}{3}(2t_1-3\eta(1-u)) - (1-2u)\eta\\
f(\xi) &\ge 3t_1-\xi-3\eta(1-u) = \frac{1-u}{3} \xi - (1-2u)\eta.
\end{align*}
(In fact $\xi$ was defined so that the last equality is satisfied.)
\end{proof}

The following variant, which will be used to prove Theorem \ref{thm:full}, has a nearly identical proof.
\begin{corollary} \label{c:2/57}
Fix $u\in [0,0.06]$. Let $f:[0,1]\to\R$ be a $1$-Lipschitz function such that $f(0)=0$ and $f(x)\ge u x$ for all $x\in [0,1]$. Let
\begin{align*}
\eta &= \frac{2 (1+u) (1-2 u)}{57-30 u+48 u^2} =  \chi(u)-\frac{2(1+u)}{3}\\
\xi &= \frac{4-4 u-8 u^2-  \eta(9+30u-24u^2)}{5 (1-2 u)}
\end{align*}
Then either $\T(f)\le (1-2u)/3-\eta$ or
\[
f(x) \ge \frac{x-3(1-4u)\eta}{4} \quad\text{on } [0,\xi].
\]
\end{corollary}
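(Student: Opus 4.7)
The plan is to mimic the proof of Corollary \ref{c:1/42}. First I would assume $\T(f)>(1-2u)/3-\eta$, as otherwise the first alternative of the conclusion holds. One checks directly from the explicit formula that $\eta\le 1/21$ on $[0,0.06]$, so the hypotheses of Proposition \ref{p:stability} are satisfied, and applying that proposition produces the piecewise-linear lower bound for $f$ with breakpoints at $3\eta$, $t_1$, $2t_1-6\eta(1-u)$, $2t_1-3\eta(1-u)$ and $2t_1$, where $t_1$ is given by \eqref{eq:t_1}.

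Let $L(x)=(x-3(1-4u)\eta)/4$; the goal is to show $f(x)\ge L(x)$ on $[0,\xi]$. Since $L$ has slope $1/4$ and the lower bound from Proposition \ref{p:stability} is piecewise linear with slopes in $\{u,1,0,-1\}$ and $u\le 0.06<1/4$, the difference between the lower bound and $L$ is monotone on each subinterval. Hence it suffices to verify the inequality at the breakpoints lying in $[0,\xi]$. I would first use the explicit expression to see that $\eta<1/40$ on $[0,0.06]$, which forces $t_1>1/3$ so that the breakpoints appear in the listed order, and a short computation then shows $\xi\in(2t_1,1)$, placing $\xi$ inside the final slope-$(-1)$ segment of the lower bound.

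Next I would check the breakpoints in turn. The inequality at $x=0$ is trivial, and at $x=3\eta$ it is in fact an equality: both the lower bound and $L(3\eta)=3\eta(1-(1-4u))/4$ equal $3u\eta$. The formula for $\xi$ is likewise forced by equality at $x=\xi$ on the slope-$(-1)$ segment: imposing
\[
3t_1-\xi-3\eta(1-u)=\frac{\xi-3(1-4u)\eta}{4}
\]
and substituting $t_1=(1+u)/3-2u(2-u)\eta/(1-2u)$ yields exactly the formula in the statement. The remaining breakpoints $t_1$, $2t_1-6\eta(1-u)$, $2t_1-3\eta(1-u)$, $2t_1$ each reduce, after collecting terms, to an inequality of the form $t_1\ge c(u)\,\eta$ for an explicit $c(u)=O(1)$; since $t_1\approx (1+u)/3$ while $\eta<1/40$ on our range of $u$, all of them hold with substantial slack.

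The main obstacle is purely algebraic bookkeeping: carrying out the substitution to obtain the stated formula for $\xi$, and confirming that the secondary breakpoint inequalities remain comfortably satisfied for the specific $\eta=2(1+u)(1-2u)/(57-30u+48u^2)$ on $[0,0.06]$. No conceptual input beyond what is already used in the proof of Corollary \ref{c:1/42} is required; the only reason for a separate statement is that a different (smaller) target slope $1/4$ in place of $(1-u)/3$ is needed for the box-counting argument in Theorem \ref{thm:full}.
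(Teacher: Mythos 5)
Your proposal follows the paper's proof exactly: apply Proposition \ref{p:stability}, compare the piecewise-linear lower bound (with slopes $u,1,0,-1,1,-1$) against the target line of slope $1/4$, and reduce the verification to the three ``local minimum'' breakpoints $3\eta$, $2t_1-3\eta(1-u)$ and $\xi$, with $\xi$ defined so as to force equality at the right endpoint. One small slip: the numerical claim $\eta<1/40$ is false on this range, since already at $u=0$ one has $\eta=2/57\approx 0.0351>1/40$; however, the bound that the argument actually requires (roughly $\eta< 2t_1/(9(2-u))$, together with $\eta\le 1/21$ for Proposition \ref{p:stability}) does hold for $u\in[0,0.06]$, so the conclusion is unaffected.
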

\begin{proof}
Assume $\T(f)\le (1-2u)/3-\eta$. Applying Proposition \ref{p:stability} as in the previous corollary it is enough to check the claimed inequality for $x=3\eta, x=2t_1-3\eta(1-u)$ and $x=\xi$. Using that $u\in [0,0.06]$ we get $\eta<1/29$ and $t_1>1/3$. Using this and Proposition \ref{p:stability},
\begin{align*}
f(3\eta) &\ge 3\eta u = \frac{3\eta-3(1-4u)\eta}{4}\\
f(2t_1-3\eta(1-u)) & \ge t_1-6\eta(1-u) > \frac{2t_1-3\eta(1-u)-3(1-4u)\eta}{4}\\
f(\xi) &\ge 3t_1-\xi-3\eta(1-u) = \frac{\xi-3(1-4u)\eta}{4}\eta,
\end{align*}
where again $\xi$ was defined precisely so that the last equality is satisfied.
\end{proof}

We conclude this section with a lemma that will help us translate the results for Lipschitz functions to results for $[-1,1]$-sequences; it is the special case of \cite[Lemma 5.22]{KeletiShmerkin18} in which $L=\ell$.
\begin{lemma}\label{l:sigmatof}
Let $\gamma,\Gamma\in[-1,1]$, $\tau\in(0,1/2)$,
$\zeta\in(0,1)$ and
let $\sigma\in [-1,1]^\ell$ satisfy
\[
\gamma j - \zeta\ell \le
\sigma_1 +\ldots + \sigma_j \le
\Gamma j + \zeta\ell  \quad (1\le j\le \ell).
\]

Then there exists a piecewise linear $1$-Lipschitz
function $f:[0,1]\to\R$ such that
\begin{enumerate}[(a)]
\item \label{i:fdef}
$f(j/\ell)=\frac{1}{\ell}(\sigma_1+\ldots+\sigma_j)$
if $\sqrt\zeta \ell \le j \le \ell$,
\item \label{i:estimates}
    $(\gamma-\sqrt\zeta) x \le f(x) \le
       (\Gamma +\sqrt\zeta) x$ on $[0,1]$ and
\item \label{i:MT}
\[
\frac{1}{\ell} \M_\tau(\sigma) \le
\T(f)
+2\sqrt{\zeta}+ 144\tau + O_{\tau}(\log\ell /\ell).
\]
\end{enumerate}
\end{lemma}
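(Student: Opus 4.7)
The plan is to realize $f$ by piecewise-linear interpolation of the partial sums of $\sigma$ at the grid $j/\ell$, except on the initial interval $[0,\sqrt\zeta]$ where the hypothesis only bounds $\sigma_1+\cdots+\sigma_j$ up to an additive $\zeta\ell$ that is too large relative to the scale $j/\ell\approx\sqrt\zeta$. On that initial piece I would replace the interpolant by a single linear segment through the origin. The translation of $\T(f)$ into $\M_\tau(\sigma)$ in (c) is then a matter of discretizing a good continuous partition into a $\tau$-good integer one.

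Concretely, set $j_*=\lceil\sqrt\zeta\,\ell\rceil$. On $[j_*/\ell,1]$ let $f$ be the piecewise-linear function with $f(j/\ell)=(\sigma_1+\cdots+\sigma_j)/\ell$ for $j_*\le j\le\ell$; its slopes $\sigma_{j+1}\in[-1,1]$ make it $1$-Lipschitz there. Extend $f$ linearly from $(0,0)$ to $(j_*/\ell,f(j_*/\ell))$; since $|\sigma_1+\cdots+\sigma_{j_*}|\le j_*$, the extension has slope in $[-1,1]$, and $f$ is globally $1$-Lipschitz. Property (a) holds by construction. For (b), the hypothesis at a vertex $j\ge j_*$ rearranges to $\gamma(j/\ell)-\zeta\le f(j/\ell)\le\Gamma(j/\ell)+\zeta$, which, since $\zeta\le\sqrt\zeta\cdot(j/\ell)$, implies the cone bound at every such vertex; linearity between vertices extends it to $[j_*/\ell,1]$, and the same bound at $j_*/\ell$ shows the slope $f(j_*/\ell)/(j_*/\ell)$ of the initial segment lies in $[\gamma-\sqrt\zeta,\Gamma+\sqrt\zeta]$, transferring the cone bound to $[0,j_*/\ell]$.

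For (c), fix $\delta>0$ and pick a good partition $(a_n)_{n\ge0}$ of $[0,1]$ with $\T(f,(a_n))\le\T(f)+\delta$. Since $f$ is linear on $[0,\sqrt\zeta]$, the aggregate drop extracted by $(a_n)$ from this region is at most $\sqrt\zeta$; truncating $(a_n)$ at the largest index $n_*$ with $a_{n_*}\ge\sqrt\zeta$ and rounding the boundary costs at most a further $\sqrt\zeta$. I then define the integer partition $N_j=\lfloor\ell\,a_{n_*-j}\rfloor$ for $j=0,\dots,n_*$, forcing $N_0=0$ and $N_{n_*}=\ell$. The good-partition inequality $N_{j+1}-N_j\le N_j+1$ inherits from $a_{n-1}/a_n\le2$ up to one unit per step, and the per-block identity
\[
\tfrac{1}{\ell}\,\mathcal{S}\!\left(\sigma|(N_j,N_{j+1}]\right) = f(N_j/\ell)-\min_{N_j\le i\le N_{j+1}}f(i/\ell)
\]
matches the corresponding summand of $\T(f,(a_n))$ up to an $O_T(1/\ell)$ discretization error, accumulating to $O_\tau(\log\ell/\ell)$ since any $\tau$-good partition of $(0,\ell]$ has $O_\tau(\log\ell)$ blocks.

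The main obstacle is the final step: promoting $(N_j)$ to a genuinely $\tau$-good partition by enforcing $\tau N_j\le N_{j+1}-N_j$ everywhere. Wherever this fails one must merge the offending short block with a neighbor; each merger can increase that pair's contribution to $\M(\sigma,\cdot)$, and the task is to bookkeep the aggregated damage carefully so as to obtain a linear-in-$\tau$ bound. I would follow the argument in the proof of \cite[Lemma~5.22]{KeletiShmerkin18}, which budgets the drops lost and gained by the mergers and yields the clean constant $144$. Combining with the two $\sqrt\zeta$ losses and letting $\delta\to0$ gives (c).
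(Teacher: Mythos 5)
Note first that the paper does not prove Lemma~\ref{l:sigmatof}: immediately before the statement it declares the lemma to be ``the special case of \cite[Lemma~5.22]{KeletiShmerkin18} in which $L=\ell$'' and cites it without proof, so there is no in-paper argument to compare against. Evaluating your outline on its own terms: the construction of $f$ (piecewise-linear interpolation of the normalized partial sums on $[j_*/\ell,1]$, completed by the chord from the origin on $[0,j_*/\ell]$), together with your verifications of the $1$-Lipschitz property and of (a) and (b), are correct.

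Part (c) is incomplete, on two counts. First, the formula $N_j=\lfloor\ell\,a_{n_*-j}\rfloor$ with $a_{n_*}\ge\sqrt\zeta$ gives $N_0\ge\lfloor\sqrt\zeta\,\ell\rfloor$, not $0$; if you then ``force $N_0=0$'' by fiat, the first block $(0,N_1]$ has length $\approx\sqrt\zeta\,\ell$, which grossly violates the good-partition constraint $N_1-N_0\le N_0+1=1$. One must actually fill $(0,\lfloor\ell a_{n_*}\rfloor]$ with a dyadic chain of intermediate blocks and charge their total $\mathcal{S}$-contribution (which is $O(\sqrt\zeta)$ after dividing by $\ell$) to the $2\sqrt\zeta$ budget. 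Second, and more importantly, the step you yourself identify as ``the main obstacle'' --- upgrading a good partition to a $\tau$-good one by merging short blocks, while noticing that a merge can easily destroy the upper constraint $N_{j+1}-N_j\le N_j+1$ and then bookkeeping the resulting increase of $\M(\sigma,\cdot)$ down to the $144\tau$ term --- is exactly the technical core of \cite[Lemma~5.22]{KeletiShmerkin18}, and you simply defer to that proof. Since the statement in question \emph{is} that lemma (with $L=\ell$), the deferral is circular: your proposal sets up the easy framing correctly but does not supply the combinatorial merging argument on which the result actually rests.
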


\section{The Guth-Iosevich-Ou-Wang estimates and some consequences}
\label{sec:GIOW}

Let $\mu,\nu\in\cP([0,1)^2)$. We are interested in estimating the dimension of $\Delta_y(\supp(\mu))$ for $\nu$-typical $y$ or, rather, discretized versions of this, under suitable assumptions on $\mu$ and $\nu$. A key innovation of Guth, Iosevich, Ou and Wang \cite{GIOW18} is a decomposition $\mu=\mu_{\text{good}}+\mu_{\text{bad}}$ where $\Delta_y\mu_{\text{bad}}$ has small mass, and $\Delta_y\mu_{\text{good}}$ has small $L^2$ norm, in both cases for $y$ in a set of large $\nu$-measure. For simplicity, we will denote $\wt{\mu}=\mu_{\text{good}}$ (note also that what we denote by $\mu$ and $\nu$ are denoted $\mu_1, \mu_2$ in \cite{GIOW18}).

When $\mu$ is a general probability measure, $\wt{\mu}$ is a complex-valued distribution. We will apply the results of \cite{GIOW18} in the case in which $\mu$ is a $2^{-T\ell}$-measure, and in this case $\wt{\mu}$ can be seen as a (complex-valued) absolutely continuous measure. The following quantitative estimates are implicitly proved in the course of the proofs of \cite[Proposition 2.1 and Proposition 2.2]{GIOW18}.

\begin{theorem}[\cite{GIOW18}] \label{thm:GIOW}
Suppose $\mu,\nu\in\cP([0,1)^2)$ satisfy
\begin{align*}
\cE_{s_0}(\mu) &\le K_{\mu,s_0}\\
\cE_{s_0}(\nu) &\le K_{\nu,s_0}\\
\nu(B(x,r)) &\le \wt{K}_{\nu,s_1} r^{s_1} \,\text{for all }x\in\R, r>0
\end{align*}
for some $s_0,s_1>1$. Assume also that $\mu$ has a bounded density and
\[
\dist(\supp(\mu),\supp(\nu)) \gtrsim 1.
\]
Then for all large $R$ and small $\delta>0$ there is a function $\wt{\mu}:\R^2\to\mathbb{C}$ such that the following holds:
\begin{enumerate}
  \item There is a set $B$ with $\nu(B)\ge 1- R^{\delta/O(1)}$ such that
  \[
  \int |\Delta_y\mu(t)-\Delta_y\wt{\mu}(t)| \,dt \lesssim_{\delta} (K_{\mu,s_0}K_{\nu,s_0})^{1/2} R^{-\delta/O(1)}
  \]
  for each $y\in B$.
  \item
  \[
  \int \| \Delta_y\wt{\mu}\|_2^2 \,d\nu(y) \lesssim_{\delta} \wt{K}_{\nu,s_1}^{1/3} R^{O(1)} \,\mathcal{E}_{(5-s_1+O(\delta))/3}(\mu).
  \]
\end{enumerate}
The implicit constants may depend on the distance between $\supp(\mu)$ and $\supp(\nu)$ as well as $s_0$ and $s_1$.
\end{theorem}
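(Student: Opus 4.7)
The plan is to extract both estimates from the wave-packet decomposition arguments in the proofs of Propositions 2.1 and 2.2 of \cite{GIOW18}. The entire task is to re-examine those arguments and track how the implicit constants depend on the three hypothesized quantities $K_{\mu,s_0}$, $K_{\nu,s_0}$, and $\wt K_{\nu,s_1}$, rather than on the specific Frostman measures fixed there.

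First I would carry out the wave-packet decomposition of $\mu$ at scale $R$: after a smooth Fourier cutoff to frequencies of size $R$, write $\mu=\sum_T \mu_T$ where $T$ ranges over physical $R^{-1/2}\times R^{-1}$ tubes and each $\mu_T$ has Fourier support in the dual slab. Following \cite[\S 2]{GIOW18}, one declares a tube to be \emph{narrow at $y$} when only few essentially distinct directions from $y$ meet a neighbourhood of $T$, and defines $\wt\mu$ to be the sum of the narrow wave packets. For part (1), the broad contribution to $\|\Delta_y\mu-\Delta_y\wt\mu\|_{L^1}$ is controlled by combining orthogonality of wave packets in distinct directions with an incidence count for tubes through balls; applying Cauchy--Schwarz over pairs $(x,y)\in\supp(\mu)\times\supp(\nu)$ and bounding the relevant incidences by the energies $\cE_{s_0}(\mu)$ and $\cE_{s_0}(\nu)$ produces the geometric mean $(K_{\mu,s_0}K_{\nu,s_0})^{1/2}$, while the wave-packet scale $R$ gives the $R^{-\delta/O(1)}$ gain. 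Markov then extracts the set $B$ of large $\nu$-measure with the stated pointwise bound.

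For part (2) I would use the refined $\ell^2$-decoupling theorem on the truncated light cone in $\R^3$, exactly as in the proof of \cite[Proposition 2.2]{GIOW18}: identifying $\{\Delta_y\wt\mu\}_y$ with a slice of an extension operator, decoupling upgrades the $L^2$ norm integrated against $\nu$ to a weighted $\ell^2$ sum over wave packets. The Frostman bound $\nu(B(x,r))\le \wt K_{\nu,s_1}r^{s_1}$ enters when weighting this sum by $\nu$ at the wave-packet scale $R^{-1/2}$, producing the factor $\wt K_{\nu,s_1}^{1/3}$ that reflects the $L^3$ endpoint used in the interpolation; the remaining $\ell^2$ sum is dominated, up to $R^{O(1)}$ losses, by the energy $\cE_{(5-s_1+O(\delta))/3}(\mu)$. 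Note that $s_1=5/4$ recovers the GIOW threshold, since $(5-5/4)/3=5/4$.

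The main obstacle will be the bookkeeping. Since \cite{GIOW18} fixes specific Frostman measures, one must inspect every step --- Plancherel, wave-packet orthogonality, the decoupling/interpolation, and the Cauchy--Schwarz and pigeonhole passages --- to confirm that the only dependence on $\mu,\nu$ enters through the three hypothesized quantities, and that the $O(\delta)$ shift in the energy exponent is the only unavoidable dimension-dependent loss. No new ideas are needed beyond those of \cite{GIOW18}, but this careful verification is the essence of the theorem.
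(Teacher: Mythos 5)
Your high-level plan --- re-examine the GIOW proofs and track how constants depend on the three quantities $K_{\mu,s_0}$, $K_{\nu,s_0}$, $\wt K_{\nu,s_1}$ --- is exactly what the paper does, and your observation that $s_1=5/4$ recovers the GIOW threshold is a good sanity check. However, your description of the mechanism behind part (1) is off in a way that would cause the bookkeeping to go wrong.

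The factor $(K_{\mu,s_0}K_{\nu,s_0})^{1/2}$ does not come from ``an incidence count for tubes through balls'' followed by Cauchy--Schwarz over pairs $(x,y)$. In \cite{GIOW18} the good/bad decomposition of wave packets is not a narrow/broad classification by direction count; a tube is declared bad relative to $y$ when the radial (spherical) projection $P_y$ of the corresponding piece of $\mu$ carries too much mass. The $L^1$ bound on the bad contribution reduces, in the last line of the proof of \cite[Lemma~3.6]{GIOW18}, to controlling $\int \|P_y\mu\|_{L^p}\,d\nu(y)$ for some $p=p(s_0)>1$. Finiteness of this integral is precisely Orponen's spherical projection theorem \cite[Theorem~1.11]{Orponen18}, and the quantitative form \cite[Eq.\ (3.6)]{Orponen18}, $\int \|P_y\mu\|_{L^p}^p\,d\nu(y)\lesssim \cE_{s_0}(\nu)^{1/(2p)}\cE_{s_0}(\mu)^{1/2}$, is what produces the geometric mean of the two $s_0$-energies. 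Without identifying Orponen's theorem here you cannot actually produce the claimed dependence on $K_{\mu,s_0}$ and $K_{\nu,s_0}$; it is also the reason the hypothesis $s_0>1$ is essential. You should further make explicit the observation, which justifies allowing $s_0\neq s_1$, that the proofs of \cite[Propositions~2.1 and~2.2]{GIOW18} share only the choice of $R$, $\delta$, and $\wt\mu$ and are otherwise independent, so the exponents in the two hypotheses need not match. Your account of part (2) --- the Frostman bound enters once, through $\|\nu * \eta_{1/r}\|_{L^\infty}^{1/3}$ at the wave-packet scale, hence the power $\wt K_{\nu,s_1}^{1/3}$ --- is in agreement with the paper's reading of \cite[Proposition~5.3]{GIOW18}.
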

\begin{proof}
We indicate how to deduce these estimates by following the proofs of \cite[Propositions 2.1 and 2.2]{GIOW18}. In that paper $s_0=s_1$ and the Frostman constants of $\mu$ and $\nu$ (essentially our $K$'s) remain fixed so the authors do not pay explicit attention to how other quantities depend on them; however, it is not hard to track the exact dependencies. Likewise, $R_0$ (our $R$) and $\delta$ in \cite{GIOW18} are respectively large and small but ultimately fixed, while we need explicit estimates in terms of these parameters, which again are not hard to extract from the proofs of \cite{GIOW18}.

We start by noting that in the proofs of \cite[Propositions 2.1 and 2.2]{GIOW18} the values of $R$ and $\delta$ and therefore the measure $\wt{\mu}$ are the same, but otherwise the proofs are independent from each other. In particular, we are allowed to consider different exponents $s_0,s_1$ in each proof. Also, while \cite[Proposition 2.2]{GIOW18} has an assumption $s_1>5/4$ (in our notation), this is not used in the proof until the very end; for our estimates to be valid in fact $s_1>0$ is enough (on the other hand, $s_0>1$ is key as we will see shortly).

In the proof of \cite[Lemma 3.6]{GIOW18}, the last line of the proof shows that the implicit constant in the statement is given by
\[
O(1) \int \| P_y \mu\|_{L^p} \, d\nu(y),
\]
where $P_y$ denotes spherical projection with center $y$ and $p=p(s_0)>1$. The fact that this integral is finite under our assumptions was proved by Orponen in \cite[Theorem 1.11]{Orponen18}. In fact, Orponen provides a quantitative estimate in \cite[Eq. (3.6)]{Orponen18}:
\[
 \int \| P_y \mu\|_{L^p}^p \, d\nu(y) \lesssim \cE_{s_0}(\nu)^{1/(2p)} \cE_{s_0}(\mu)^{1/2}.
\]
Hence the implicit constant in \cite[Lemma 3.6]{GIOW18} is $O_{s_0}(1) (K_{\mu,s_0} K_{\nu,s_0})^{1/2}$. The first claim follows by inspecting the proof of \cite[Proposition 2.1]{GIOW18} assuming Lemma 3.6.

We turn to the second part. Looking at \cite[Eq. (5.1) and following display]{GIOW18}, we see that the claim will follow by establishing that $\e=O(\delta)$, the constant $C(R)=R^{O(1)}$, and the implicit constant in \cite[Proposition 5.3]{GIOW18} is $O(K_{\nu,s_1}^{1/3})$ .  The first two facts can be easily read off the proof of \cite[Proposition 5.3]{GIOW18}; in particular see the last line of the proof for the value of $C(R)$.

In the proof of \cite[Proposition 5.3]{GIOW18} , the Frostman condition on $\nu$ is used at a single point: to estimate $\| \nu*\eta_{1/r}\|_{L^\infty}\lesssim r^{2-s_1}$ where $r$ is large and $\eta_{1/r}$ is a mollifier (recall that $\mu_2$ and $\alpha$ in \cite{GIOW18} are our $\nu$ and $s_1$). The explicit bound is $\| \nu*\eta_{1/r}\|_{L^\infty}\lesssim \wt{K}_{\nu,s_1}\, r^{2-s_1}$. Following through with the proof, we see that the factor that appears in the final estimate is actually $\| \nu*\eta_{1/r}\|_{L^\infty}^{1/3}$, and so one ultimately gets an additional factor $\wt{K}_{\nu,s_1}^{1/3}$.

\end{proof}

The following corollary of Theorem \ref{thm:GIOW} is deduced in essentially the same way as in the proof of \cite[Theorem 1.2]{GIOW18} from \cite[Propositions 3.1 and 3.2]{GIOW18}.

\begin{corollary} \label{cor:GIOW}
Under the same assumptions of Theorem \ref{thm:GIOW}, the following holds: if $C_1=C_1(\delta)$ and $C_2$ are sufficiently large, then there exists a set $B$ with
\[
\nu(B) > 1 - R^{-\delta/C_2}
\]
such that if $y\in B$ and $A$ is any set satisfying
\[
\mu(A) > C_1 (K_{\mu,s_0} K_{\nu,s_0})^{1/2} R^{-\delta/C_2},
\]
then
\[
|\Delta_y(A)|^{-1} \le \mu(A)^{-2} \,\wt{K}_{\nu,s_1}^{1/3} \, R^{C_2} \,\cE_{(5-s_1+C_2\delta)/3}(\mu).
\]
\end{corollary}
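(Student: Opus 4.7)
The plan is to derive the corollary from Theorem \ref{thm:GIOW} by combining Markov's inequality with Cauchy--Schwarz, in the spirit of the deduction of \cite[Theorem 1.2]{GIOW18} from its Propositions 3.1 and 3.2.

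First, I would apply Theorem \ref{thm:GIOW} with the given $R$ and $\delta$ to obtain the approximant $\wt{\mu}$ together with a set $B_1$ of $\nu$-measure at least $1-R^{-\delta/O(1)}$ on which the $L^1$ estimate in part (1) holds. Next, apply Markov's inequality to the integral bound in part (2): the set
\[
B_2 = \left\{ y : \|\Delta_y\wt{\mu}\|_2^2 > R^{\delta/C_2} \cdot C_\delta \, \wt{K}_{\nu,s_1}^{1/3} R^{O(1)} \cE_{(5-s_1+O(\delta))/3}(\mu) \right\}
\]
satisfies $\nu(B_2) \le R^{-\delta/C_2}$, where $C_\delta$ absorbs the implicit constant $\lesssim_\delta$. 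Let $B=B_1\setminus B_2$; after enlarging $C_2$ to absorb the $O(1)$ factors in the measure of $B_1$ we get $\nu(B) \ge 1-R^{-\delta/C_2}$, and for every $y\in B$ both estimates from Theorem \ref{thm:GIOW} hold simultaneously.

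Now fix $y\in B$ and a Borel set $A\subset\supp\mu$. Since $\Delta_y^{-1}(\Delta_y A) \supset A$, the push-forward satisfies $\Delta_y\mu(\Delta_y A) \ge \mu(A)$, hence
\[
\mu(A) \le \int_{\Delta_y A} |\Delta_y\wt{\mu}(t)|\,dt + \int |\Delta_y\mu - \Delta_y\wt{\mu}|(t)\,dt.
\]
For $y\in B_1$ the second term is at most $\tfrac{1}{2}\mu(A)$, provided $C_1(\delta)$ in the hypothesis $\mu(A) > C_1(\delta)(K_{\mu,s_0}K_{\nu,s_0})^{1/2}R^{-\delta/C_2}$ is chosen larger than twice the implicit constant from Theorem \ref{thm:GIOW}(1). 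Applying Cauchy--Schwarz to the remaining term yields
\[
\tfrac{1}{2}\mu(A) \le |\Delta_y A|^{1/2}\,\|\Delta_y\wt{\mu}\|_2,
\]
and squaring gives
\[
|\Delta_y A|^{-1} \le 4\mu(A)^{-2}\|\Delta_y\wt{\mu}\|_2^2 \le \mu(A)^{-2}\wt{K}_{\nu,s_1}^{1/3} R^{C_2} \cE_{(5-s_1+C_2\delta)/3}(\mu),
\]
once $C_2$ is enlarged further to swallow the factor $4$, the $R^{O(1)}$, and the extra $R^{\delta/C_2}$ from the Markov step.

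I do not expect any real obstacle: the substantive analytic content (the wave-packet decomposition, bilinear restriction, and Orponen's spherical projection theorem) is already packaged inside Theorem \ref{thm:GIOW}, so the corollary is essentially a bookkeeping step. The only minor care required is to bound $\Delta_y\wt{\mu}$ in absolute value on the right-hand side above, reflecting the fact that $\wt{\mu}$ is complex-valued, and to verify that the various $O(1)$ and $O(\delta)$ terms coming from Theorem \ref{thm:GIOW} can all be uniformly absorbed into a single large constant $C_2$.
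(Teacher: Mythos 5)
Your proof is correct and follows essentially the same route as the paper: split off a good set using the $L^1$ estimate from Theorem~\ref{thm:GIOW}(1), use Markov's inequality on the $L^2$ estimate from Theorem~\ref{thm:GIOW}(2), intersect, then bound $\mu(A)$ by the triangle inequality and Cauchy--Schwarz to obtain the reciprocal Lebesgue measure bound. The bookkeeping choices (where exactly to place the Markov threshold, and whether to absorb constants step by step or at the end) differ slightly from the paper but are immaterial.
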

\begin{proof}
By the first part of Theorem \ref{thm:GIOW}, $\nu(B')\ge 1- R^{-\delta/C_2}$, where
\[
B'=\left\{ y: \int | \Delta_y\mu(t) - \Delta_y\wt{\mu}(t) | dt \le \frac{1}{2} C_1 (K_{\mu,s_0} K_{\mu,s_1})^{1/2} R^{-\delta/C_2}\right\},
\]
and $C_1$ and $C_2$ as in the statement. On the other hand, for each $y\in B'$ we have
\begin{align*}
\mu(A) &\le \int_{\Delta_y(A)} \Delta_y\mu(t) \,dt \\
&\le \int_{\Delta_y(A)} |\Delta_y\mu(t)-\Delta_y\wt{\mu}(t)|\,dt+ \int_{\Delta_y(A)} |\Delta_y\wt{\mu}(t)|\,dt\\
&\le \frac{1}{2}  \mu(A) + \int_{\Delta_y(A)} |\Delta_y\wt{\mu}(t)|,
\end{align*}
using the assumption on $\mu(A)$ in the last line. Using Cauchy-Schwarz, we deduce
\[
\mu(A)^2/4 \le \left(\int_{\Delta_y(A)} |\Delta_y\wt{\mu}(t)| dt\right)^2 \le |\Delta_y(A)| \|\Delta_y\wt{\mu}\|_{L^2}^2
\]
for all $y\in B'$. By the second part of Theorem \ref{thm:GIOW} and Markov's inequality, $\nu(B'')>1-R^{-1}$, where
\[
B'' = \left\{y: \|\Delta_y\wt{\mu}\|_{L^2}^2
\le  \wt{K}_{\nu,s_1}^{1/3} R^{O(1)} \cE_{(5-s_0+O(\delta))/3}(\mu) \right\}.
\]
The claim follows by taking $B=B'\cap B''$.
\end{proof}

The following result is a further corollary of Theorem \ref{thm:GIOW} that is well suited to the proof of Theorem \ref{thm:main}.
\begin{prop} \label{prop:GIOW-multiscale}
Let $s,s_1>1$. Suppose $\nu\in\cP([0,1)^2)$ satisfies
\[
\nu(B(x,r)) \le C\, r^{s_1}  \quad\text{for all }x\in\R,r>0.
\]
Let also $\mu\in\cP([0,1)^2)$ and assume that for each $x\in\supp_{\mathsf{d}}(\mu)$ and for each sufficiently large $\ell$ we are given a $2^{-T\ell}$-measure $\rho_{x,\ell}$ such that
\[
\cE_s(\rho_{x,\ell}) \le  2^{o_{T,\e}(1)T\ell}
\]
and
\[
\dist(\supp(\nu),\supp(\rho_{x,\ell})) \gtrsim 1.
\]

If $\ell_0$ is large enough (in terms of $T,\e$), then there exists a set $G\subset \supp(\mu)\times\supp(\nu)$ with $(\mu\times\nu)(G)\ge 9/10$ such that for $(x,y)\in G$ the following holds: if $\ell\ge \ell_0$ and $\rho_{x,\ell}(F)>\ell^{-2}/5$ for some $2^{-T\ell}$-set $F$, then
\[
|\Delta_y(F)| > 2^{-o_{T,\e}(1)T\ell} / \cE_{(5-s)/3}(\rho_{x,\ell}).
\]
\end{prop}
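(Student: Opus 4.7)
The plan is to apply Corollary \ref{cor:GIOW} to each pair $(\rho_{x,\ell},\nu)$ at every scale $\ell\ge\ell_0$ and then perform a Borel--Cantelli style union bound over $\ell$ to assemble the exceptional set $G$. For parameter setup, I would first pick $s_0\in(1,\min(s,s_1))$ and a small constant $\delta=\delta(T,\e)>0$, then set $R=R_\ell:=2^{\eta T\ell}$ for a small $\eta=\eta(T,\e)>0$ chosen so that $R^{C_2}\le 2^{o_{T,\e}(1)T\ell}$. The hypothesis gives $\mathcal{E}_{s_0}(\rho_{x,\ell})\le\mathcal{E}_s(\rho_{x,\ell})\le 2^{o_{T,\e}(1)T\ell}$ by monotonicity of $|x-y|^{-t}$ on $[0,1]^2$, while $\mathcal{E}_{s_0}(\nu)<\infty$ follows from the Frostman bound at $s_1>s_0$. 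In the regime $s\le s_1$ (which is the relevant one in the later applications) we have $\nu(B(x,r))\le Cr^{s_1}\le Cr^s$ for $r\le 1$, so Corollary \ref{cor:GIOW} may be applied using the Frostman exponent $s$.

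For each $x\in\supp_{\mathsf{d}}(\mu)$ and each $\ell\ge\ell_0$, this produces a set $B_{x,\ell}\subset\supp(\nu)$ with
\[
\nu(B_{x,\ell}^c)\le R_\ell^{-\delta/C_2}\le 2^{-\eta\delta T\ell/C_2}.
\]
Defining $B_x:=\bigcap_{\ell\ge\ell_0}B_{x,\ell}$, summing the geometric series yields $\nu(B_x^c)<1/10$ once $\ell_0=\ell_0(T,\e)$ is large. Setting $G:=\{(x,y):x\in\supp_{\mathsf{d}}(\mu),\ y\in B_x\}$, Fubini gives $(\mu\times\nu)(G)\ge 9/10$.

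To verify the conclusion, suppose $(x,y)\in G$, $\ell\ge\ell_0$, and $F$ is a $2^{-T\ell}$-set with $\rho_{x,\ell}(F)>\ell^{-2}/5$. The threshold $C_1(K_{\rho_{x,\ell},s_0}K_{\nu,s_0})^{1/2}R^{-\delta/C_2}$ in Corollary \ref{cor:GIOW} is at most $2^{o(T\ell)-\eta\delta T\ell/C_2}$, so for $\ell_0$ large (chosen after $\delta$ and $\eta$ are fixed) it is dominated by $\ell^{-2}/5$. Corollary \ref{cor:GIOW} then yields
\[
|\Delta_y(F)|^{-1}\le \rho_{x,\ell}(F)^{-2}\cdot C^{1/3}\cdot R^{C_2}\cdot\mathcal{E}_{(5-s+C_2\delta)/3}(\rho_{x,\ell}),
\]
where the first three factors combine to at most $2^{o_{T,\e}(1)T\ell}$. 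To finish, since $\rho_{x,\ell}$ is a $2^{-T\ell}$-measure, splitting the double integral defining $\mathcal{E}_{t+\delta'}(\rho_{x,\ell})$ into near-diagonal ($|x-y|<2^{-T\ell}$) and far-diagonal pieces gives the perturbation estimate $\mathcal{E}_{t+\delta'}(\rho_{x,\ell})\lesssim 2^{O(\delta'T\ell)}\mathcal{E}_t(\rho_{x,\ell})$, which with $\delta'=C_2\delta/3$ and $\delta=\delta(T,\e)$ small is absorbed into the $2^{o_{T,\e}(1)T\ell}$ factor, producing the desired lower bound on $|\Delta_y(F)|$.

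The main obstacle is the balancing of the parameters $R$ and $\delta$: we need $R^{C_2}$ to be a subexponential multiplicative error (forcing $R$ to be of the form $2^{\eta T\ell}$ with $\eta$ small), yet $R^{-\delta/C_2}$ must be summable in $\ell$ (forcing a positive exponent in $\ell$), all while the $2^{o_{T,\e}(1)T\ell}$ slack in the energy hypothesis on $\rho_{x,\ell}$ must not swallow any of these gains. A secondary subtlety is the mismatch between the Frostman exponent $s_1$ of $\nu$ and the energy exponent $s$ of $\rho_{x,\ell}$ appearing in the conclusion; this is handled cleanly in the range $s\le s_1$ by using the derived Frostman bound at exponent $s$, and is the implicit regime of the proposition's intended use.
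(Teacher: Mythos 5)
Your proposal follows essentially the same route as the paper: for each $x$ and each scale $\ell$, apply Corollary \ref{cor:GIOW} to $(\rho_{x,\ell},\nu)$ with a fixed small $\delta=o_{T,\e}(1)$ and $R=R_\ell=2^{o_{T,\e}(1)T\ell}$ balanced so that $R^{C_2}$ is a subexponential multiplicative error while $R^{-\delta/C_2}$ decays geometrically in $\ell$, then intersect the resulting good sets over $\ell\ge\ell_0$. The paper adds the cosmetic step $B_{x,\ell}=\cR_\ell B$ and the observation that $|\Delta_y(F)|\approx|\Delta_{y'}(F)|$ when $|y-y'|\lesssim 2^{-T\ell}$ and $F$ is a $2^{-T\ell}$-set, which you skip harmlessly; and it cites Lemma \ref{lem:energy-regular} where you give a direct near/far-diagonal split to absorb the $O(\delta)$ shift in the energy exponent — both are valid.

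One remark on your resolution of the $s$ versus $s_1$ mismatch: you restrict to $s\le s_1$ and assert that this is the regime arising in the applications. It is not: in the proof of Theorem \ref{thm:main} the proposition is invoked with $s_1=s_0$, where $s_0>1$ comes from a Frostman measure on a putative exceptional set of dimension barely above $1$, so generically $s_1<s$. The paper's own proof has the identical blemish — it writes $\widetilde{K}_{\nu,s}=O(1)$, which also tacitly requires $s\le s_1$ — so your caution was well placed, but the correct repair is to state the conclusion with $\cE_{(5-\min(s,s_1))/3}(\rho_{x,\ell})$ rather than to assume $s\le s_1$. That weaker conclusion still suffices downstream: with $t=(5-s_0)/3$ the coefficient $t-1-\tfrac{1+u}{3}$ appearing in the Lemma \ref{lem:energy-regular} bound for $\log\cE_t(\rho')$ equals $(2-s_0-s)/3<0$ since $s_0,s>1$, so the key energy estimate in the proof of Theorem \ref{thm:main} survives unchanged.
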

\begin{proof}
Fix $x$ and $\ell$ for the time being and write $\rho=\rho_{x,\ell}$ for simplicity. We are in the setting of Corollary \ref{cor:GIOW} applied to $\rho$ (in place of $\mu$) and $\nu$, with $s_0\in (1,\min(s,s_1))$, $K_{\rho,s_0}=O(1) 2^{o_{T,\e}(1)T\ell}$, and $K_{\nu,s_0}=\wt{K}_{\nu,s}=O(1)$. Here we are using that a Frostman condition with exponent $s_1$ implies (quantitative) finiteness of the energy for $s_0<s_1$.

We can then fix $\delta = o_{T,\e}(1)$ and $R=2^{o_{T,\e}(1)T\ell}$ in such a way that
\[
\ell^{-2}/5 > C_1 (K_{\rho,s_0}K_{\nu,s_0})^{1/2} R^{-\delta/C_2}\quad\text{and}\quad R^{-\delta/C_2}\le 2^{-T\ell/O_{T,\e}(1)}.
\]
Let $B_{x,\ell}=\cR_\ell B$, where $B$ is the set given by Corollary \ref{cor:GIOW} applied with these parameters. Then
\[
\nu(B_{x,\ell})\ge 1- 2^{-T\ell/O_{T,\e}(1)}.
\]
Moreover, if $y\in B_{x,\ell}$ and $F$ is a $2^{-T\ell}$-set with $\rho_{x,\ell}(F)\ge \ell^{-2}/2$, then
\begin{align*}
|\Delta_y(F)| &\ge 2^{-o_{T,\e}(1)T\ell}/\cE_{(5-s+o_{T,\e}(1))/3}(\rho_{x,\ell}) \\
&\gtrsim  2^{-o_{T,\e}(1)T\ell}/\cE_{(5-s)/3}(\rho_{x,\ell}),
\end{align*}
using Lemma \ref{lem:energy-regular} and the fact that $\rho_{x,\ell}$ is a $2^{-T\ell}$-measure. Here we are using that for a $2^{-m}$-set $F$, if $|y-y'|\le 2^{-m}$ then $|\Delta_y(F)| \le O(1) |\Delta_{y'}(F)|$.

Setting $B_{\ell_0}(x)=\cap_{\ell \ge \ell_0} B_{x,\ell}$, we have that $\nu(B_{\ell_0}(x)) \ge 9/10$ for all $x\in \supp_{\mathsf{d}}(\mu)$, again provided $\ell_0$ is large enough. The set $G=\{ (x,y): y\in B_{\ell_0}(x)\}$ has the desired properties.
\end{proof}

\section{Proofs of main theorems}
\label{sec:proofs}

\subsection{Proof of Theorem \ref{thm:main}}

In this section we prove Theorem \ref{thm:main}. We fix $s\in (1,1.04)$ for the rest of the proof; all implicit constants may depend on it. Since $\phi$ is continuous, it is enough to prove the desired conclusion under the assumption $\hdim(A)>s$ (rather than $\hdim(A)=s$).

Let $u=s-1$, and let $\eta$ and $\xi$ be the numbers given by Corollary \ref{c:1/42} for this value of $u$. We let $\phi=\phi(u)$ where $\phi$ is the function in the statement of the theorem. We note the identity
\begin{equation} \label{eq:relationship-phi}
\phi = \xi - (1-2u)\eta = \frac{2(1+u)}{3} + \eta.
\end{equation}
Indeed, $\eta$ and then $\phi$ were defined precisely so that this holds.

Recall that $o_{T,\e,\tau}(1)$ stands for a function of $T,\e,\tau$ which tends to $0$ as $T\to\infty$ and $\e,\tau\to 0^+$. We will henceforth assume that $T,\e,\tau$ are given, and that the integer $\ell_0$ is chosen large enough in terms of $T,\e,\tau$ that any required bounds on $\ell_0$ to hold.

It is enough to show that if $\nu,\mu\in\cP([0,1)^2)$ have disjoint supports and satisfy Frostman conditions
\begin{align*}
\nu(B(x,r)) &\lesssim r^{s_0},\\
\mu(B(x,r)) &\lesssim r^s,
\end{align*}
for some $s_0>1$, then there is $y\in\supp(\nu)$ (possibly depending on $T,\e,\tau$) such that
\[
\hdim(\Delta_y(\supp(\mu))) \ge \phi - o_{T,\e,\tau}(1).
\]
See e.g. \cite[Proof of Theorem 1.2]{KeletiShmerkin18} for details on this standard reduction. The measures $\nu,\mu$ are fixed for the rest of the proof and any implicit constants may depend on them.

Let $G'\subset \supp\mu\times\supp\nu$ be the set $G$ given by Proposition \ref{prop:box-counting}. In particular, $(\mu\times\nu)(G')>2/3$.

Given $\ell\ge\ell_0$ let $\ell' = \lfloor \xi\ell\rfloor$. We consider, for each $\ell$, the decomposition $(X_{\ell,i})$ given by Proposition \ref{prop:bourgain} applied to $\cR_\ell\mu$. We denote $\rho_{\ell,i}=(\cR_\ell\mu)_{X_{\ell,i}}$ and, for $x\in X_{\ell,i}$, we write
\[
\rho'_{x,\ell'} = \cR_{\ell'}\rho_{\ell,i}.
\]
(If $x\notin\cup_i X_{\ell,i}$, we define $\rho'_{x,\ell'}=\mu$ for completeness). Note that if $\rho_{\ell,i}$ is $\sigma$-regular, then $\cR_{\ell'}\rho_{\ell,i}$ is $(\sigma_1,\ldots,\sigma_{\ell'})$-regular. Using (ii) in Proposition \ref{prop:bourgain},
\[
\rho'_{x,\ell'}(B(y,r)) \lesssim \rho_{\ell,i}(B(y,r)) \lesssim 2^{o_{T,\e}(1)T\ell} \mu(B(y,r))\lesssim 2^{o_{T,\e}(1)T\ell} r^s.
\]
Since $\supp\rho'_{x,\ell'}$ is contained in the $O(2^{-\ell'})$-neighborhood of $\supp\mu$, we also have
\[
\dist(\supp\nu,\supp\rho'_{x,\ell'}) \gtrsim 1.
\]
We have checked the assumptions of Proposition \ref{prop:GIOW-multiscale} for $\nu$ and $\{ \rho'_{x,\ell'}\}$ (in fact, $\ell\to\ell'$ is not a bijection; to remedy this we can for example restrict our attention to a subset of $\ell$ on which it is). Let $G''\subset\supp\mu\times\supp\nu$ be the set given the proposition.

Define $G=G'\cap G''$. Then $(\mu\times\nu)(G)>1/2$. We henceforth fix a point $y$ with $\mu(A_1)>1/2$ for the rest of the proof, where $A_1=\{ x: (x,y)\in G\}\subset \supp(\mu)$. Our goal is to show that
\[
\hdim(\Delta_y A_1) \ge \phi-o_{T,\e,\tau}(1),
\]
which will clearly imply the statement.

In turn, by a standard dyadic pigeonholing argument (see e.g. \cite[Lemma 6.2]{KeletiShmerkin18} or \cite[Lemma 3.1]{Liu18b}), it is enough to show that if $A_2\subset A_1$ satisfies $\mu_{A_1}(A_2)\ge \ell^{-2}$ and $\ell$ is large enough, then
\begin{equation} \label{eq:box-counting-lower-estimate}
\log \cN(\Delta_y A_2 ,\ell) \ge (\phi-o_{T,\e,\tau}(1))T\ell.
\end{equation}

Since the set $\Delta_y(\cR_\ell A_2)$ is contained in the $(\sqrt{2}\cdot 2^{-T\ell})$-neighborhood of $\Delta_y A_2$, the numbers $\log \cN(\Delta_y A_2,\ell)$ and $\log \cN(\Delta_y  \cR_\ell A_2,\ell)$ differ by at most a constant. We can then assume that $A_2$ is a $2^{-T\ell}$-set. Moreover, replacing $A_2$ by $A_2 \cap \cR_\ell(A_1)$, we may assume that $A_2\subset \cR_\ell(A_1)$.

Consider again the sets $(X_{\ell,i})$ given by Corollary \ref{prop:bourgain} applied to $\cR_{\ell}\mu$. Using the first part of the corollary with $A=A_2$, and that
\[
2^{-\e T\ell}\ll  \tfrac{1}{2}\ell^{-2} \le  \mu(A_1)\mu_{A_1}(A_2) = \mu(A_2)=\cR_\ell\mu(A_2)
\]
for large enough $\ell$, we get that there is some $i$ such that, setting $X=X_{\ell,i}$ and $\rho=\rho_{\ell,i}=(\cR_\ell\mu)_{X}$,
\begin{enumerate}[(\rm i)]
\item $\rho(A_2) \ge \ell^{-2}/3$.
\item $\cR_\ell\mu(X) \ge 2^{-o_{T,\e}(1) T\ell}$ and therefore
\[
\cE_s(\rho) \le (\cR_\ell\mu(X)^{-2}\cE_s(\cR_\ell\mu) \lesssim_T 2^{o_{T,\e}(1) T\ell} \cE_s(\mu) \lesssim 2^{o_{T,\e}(1) T\ell}.
\]
\item $\rho$ is $\sigma$-regular for some sequence $\sigma=(\sigma_1,\ldots,\sigma_\ell)$, $\sigma_j\in [-1,1]$.
\item $X$ is contained in $\cR_\ell\supp_{\mathsf{d}}(\mu)$.
\end{enumerate}

Recall that $G\subset G'$ where $G'$ is the set from Proposition \ref{prop:box-counting}. Note that $\rho$ and $A_2$ satisfy the assumptions of the proposition, so  (assuming $\e>0$ is small enough to ensure separation) we get that
\begin{equation} \label{eq:lower-box-multiscale}
\log \cN(\Delta_y A_2,\ell) \ge \left(1 - \frac{\M_\tau(\sigma)}{\ell} -  o_{T,\e,\tau}(1)\right)T\ell.
\end{equation}
Denote $\rho'=\rho'_{x,\ell}=\cR_{\ell'}\rho$. Using the fact that the set $G\subset G''$ was defined so that Proposition \ref{prop:GIOW-multiscale} holds, we also have that
\[
\log|\Delta_y(A_2)| \ge  -\log\left( \cE_{(5-s)/3}(\rho')\right) -o_{T,\e}(1)T\ell'
\]
and therefore
\begin{equation} \label{eq:lower-box-GIOW}
\log \cN(\Delta_y A_2,\ell) \ge \log \cN(\Delta_y A_2,\ell') \ge (1-o_{T,\e}(1))T\ell' - \log\left( \cE_{(5-s)/3}(\rho')\right).
\end{equation}

From the results of \cite{KeletiShmerkin18} it follows the right-hand side of \eqref{eq:lower-box-multiscale} is at least $(\tfrac{2s}{3} -o_{T,\e}(1))T\ell$. The idea is to show that if that right-hand side is too close to $\tfrac{2s}{3} T\ell$, then one gets an improvement using \eqref{eq:lower-box-GIOW}.

By Lemma \ref{lem:energy-regular} and (ii), (iii) above,  and assuming that $\ell_0$ was taken large enough in terms of $T$, we have
\[
  \sum_{i=1}^j \sigma_i \ge (s-1)j - \ell o_{T,\e}(1) \qquad(j=1,\ldots,\ell).
\]
Lemma \ref{l:sigmatof} applies to give a $1$-Lipschitz function $f:[0,1]\to \R$ with
\begin{enumerate}[(a)]
  \item $f(j/\ell) = \tfrac{1}{\ell}(\sigma_1+\ldots+\sigma_j)$ for $j\in [o_{T,\e}(1)\ell ,\ell]$,
  \item $f(x) \ge (s-1-o_{T,\e}(1))x$,
  \item  $\frac{1}{\ell} \M_\tau(\sigma) \le \T(f)+o_{T,\e,\tau}(1)$.
\end{enumerate}
Suppose first that $\T(f)\le 1-\phi+ 2o_{T,\e}(1)/3$, where this particular instance of $o_{T,\e}(1)$ is the one from (b). Then \eqref{eq:lower-box-multiscale} and (c) immediately yield \eqref{eq:box-counting-lower-estimate}.

Hence from now on we assume that
\[
\T(f)> 1-\phi+ \frac{2 o_{T,\e}}{3}= \frac{1-2(u-o_{T,\e}(1))}{3}-\eta ,
\]
using the identity \eqref{eq:relationship-phi}. Thanks to (b) above, we can apply Corollary \ref{c:1/42} with $u-o_{T,\e}(1)$ in place of $u$. Since $\eta$ and $\xi$ are smooth functions of $u$, we deduce that
\[
f(x) \ge \frac{1+u}{3} x-(1-2u)\eta - o_{T,\e}(1) \quad\text{on } [0,\xi].
\]
Using (a) above, this implies that
\[
\sum_{i=1}^j \frac{1+u}{3}-\sigma_i \le \ell( (1-2u)\eta +o_{T,\e}(1)),\quad j=1,\ldots, \ell'.
\]
By Lemma \ref{lem:energy-regular}, this says that $\rho'$, which we recall is a $(\sigma_1,\ldots,\sigma_{\ell'})$-regular measure, satisfies
\[
\log\cE_{(4-u)/3}(\rho') \le T\ell( (1-2u)\eta+o_{T,\e}(1)).
\]
In light of \eqref{eq:lower-box-GIOW}, and recalling that $\ell'=\lfloor \xi\ell\rfloor$ and the identity \eqref{eq:relationship-phi}, we also get \eqref{eq:box-counting-lower-estimate} in this case, and this completes the proof of Theorem \ref{thm:main}.

\subsection{Proof of Theorem \ref{thm:full} (Hausdorff dimension part)}
\label{subsec:full-Hausdorff}

Now we being the proof of Theorem \ref{thm:full}. As before, we assume that $\hdim(A)>s$ and we aim to prove  that
\begin{equation} \label{eq:full-dist-Hausdorff-dim-claim}
\hdim(\Delta(A)) \ge \chi(s-1).
\end{equation}
In the next section we indicate how to get the statement about box dimension of pinned distance sets.

We henceforth fix $u=s-1$, $\chi=\chi(u)$ and let $\eta,\xi$ be the values given by Corollary \ref{c:2/57}. We note the identities
\begin{equation} \label{eq:identity-chi}
\chi = \xi - (1-4u)\eta = \frac{2(u+1)}{3}+\eta.
\end{equation}
Again, $\eta$ and then $\chi$ were defined precisely so that this holds.

As usual fix $T\gg 1, \e,\tau\ll 1$. It is enough to consider the case in which $A\subset [0,1)^2$. Let $\mu_1,\mu_2\in\cP([0,1)^2)$ be measures supported on $A$ such that $\cE_{s}(\mu_1),\cE_{s}(\mu_2)<\infty$, and their supports are at distance $\gtrsim 1$. Any implicit constants arising in the proof may depend on $\mu_1$, $\mu_2$ and $s$.

Let $G_1, G_2 \subset \supp\mu_1\times\supp\mu_2$  be the sets given by Proposition \ref{prop:box-counting} applied to $\mu=\mu_1$ and $\nu=\mu_2$ and to $\mu=\mu_2$ and $\nu=\mu_1$ respectively (in the second case we swap the coordinates to get a subset of $\supp\mu_1\times\supp\mu_2$). We set $G=G_1\cap G_2$ and note that $(\mu_1\times\mu_2)(G)>1/3$.

Write $\Delta(x,y)=|x-y|$ be the distance map.  Our goal is to show that
\[
\hdim(\Delta(G)) \ge \chi - o_{T,\e,\tau}(1).
\]
Since $\Delta(G)\subset \Delta(A\times A)$, this will establish the Hausdorff dimension bound \eqref{eq:full-dist-Hausdorff-dim-claim}. In turn, by the standard pigeonholing argument, in order to complete the proof it is enough to prove the following claim.

\textbf{Claim}. The following holds if $\ell$ is large enough in terms of $\mu,\nu,s,T,\e,\tau$: if $H$ is a Borel subset of $[0,1)^2\times [0,1)^2$ such that $\mu_G(H) > \ell^{-2}$, then
\[
\log\cN(\Delta(H),\ell) \ge T\ell (\chi-o_{T,\e,\tau}(1)).
\]
We will in fact prove the following stronger fact: there is $z$ such that
\begin{equation} \label{eq:claim-full-dist-thm}
\log\cN(\{ |x-z|:(x,z)\in H \text{ or } (z,x)\in H\},\ell) \ge T\ell (\chi-o_{T,\e,\tau}(1)).
\end{equation}
We may assume also that  $H$ is a $2^{-T\ell}$-set, since proving \eqref{eq:claim-full-dist-thm}  for $\cR_\ell H$ also proves it for $H$. We also assume that $H\subset\cR_\ell G$. Note that $\cR_\ell\mu(H) = \mu(H) = \mu_G(H)\mu(G)> \ell^{-2}/3$.

We denote the decompositions given by Proposition \ref{prop:bourgain} applied to $\cR_\ell\mu_j$ by $(X_{j,\ell,k})$, and set $\rho_{j,\ell,k}=(\cR_\ell\mu_j)_{X_{j,\ell,k}}$. Write $\eta_{k_1,k_2}=\cR_\ell\mu_1(X_{1,\ell,k_1})\cR_\ell\mu_2(X_{2,\ell,k_1})$, and note that
\begin{align}
(\cR_\ell \mu_1\times\cR_\ell\mu_2)(H) &\ge  \sum_{k_1,k_2} \eta_{k_1,k_2} (\rho_{1,\ell,k_1}\times\rho_{2,\ell,k_2})(H) - 2\cdot 2^{-\e T\ell},
\end{align}
so we can fix $k_1, k_2$ such that $\rho_j=\rho_{j,\ell,k_j}$ satisfy
\begin{equation} \label{eq:H-large-mass}
( \rho_1\times\rho_2)(H)  \ge \ell^{-2}/4.
\end{equation}
We can find $y_j\in \supp(\rho_j)$ and sets $A_j\subset\supp(\rho_j)$ such that  $\rho_j(A_j)\ge \ell^{-2}/4$ and
\[
(x_1,y_2), (x_2,y_1) \in H\subset \cR_\ell(G) \quad \text{for all } x_j\in A_j
\]
Let $\sigma^{(j)}\in [-1,1]^\ell$ be the sequences such that $\rho_j$ is $\sigma^{(j)}$-regular. Using Lemma \ref{lem:energy-regular} and the fact that the sets $X_{j,\ell,k_j}$ are obtained from Proposition \ref{prop:bourgain}, we have
\[
  \sum_{i=1}^k \sigma_i^{(j)} \ge (s-1)k - \ell o_{T,\e}(1) \qquad(k=1,\ldots,\ell).
\]
We apply Lemma \ref{l:sigmatof} to obtain $1$-Lipschitz functions $f_1, f_2:[0,1]\to \R$ with
\begin{enumerate}[(a)]
  \item $f_j(k/\ell) = \tfrac{1}{\ell}(\sigma_1^{(j)}+\ldots+\sigma_k^{(j)})$ for $k\in [o_{T,\e}(1)\ell ,\ell]$,
  \item $f_j(x) \ge (s-1-o_{T,\e}(1))x$,
  \item  $\frac{1}{\ell} \M_\tau(\sigma^{(j)}) \le \T(f_j)+o_{T,\e,\tau}(1)$.
\end{enumerate}
We assume first that $\T(f_1)\le 1-\chi+2o_{T,\e}(1)/3$, where this instance of $o_{T,\e}(1)$ is the one from (b). In this case there is $y'$ with $|y'-y_2|\lesssim 2^{-T\ell}$ and such that $y'$ and $A_1$ satisfy the hypotheses of Proposition \ref{prop:box-counting} applied with $\mu=\mu_1$ and $\nu=\mu_2$ (recall the definition of the set $G$). From (c) and the proposition we get
\[
\log\cN(\Delta(G),\ell) \ge \log\cN(\Delta_{y'}(A_2),\ell) -O(1) \ge (\chi - o_{T,\e,\tau}(1))T\ell.
\]
The case $\T(f_1)\le 1-\chi+2o_{T,\e}(1)/3$ is identical, so we assume from now on that for both $j=1,2$ it holds that
\[
\T(f_j) > 1-\chi+2o_{T,\e}(1)/3 = \frac{1-2(u-o_{T,\e}(1))}{3} -\eta.
\]

The goal is to apply Corollary \ref{cor:GIOW} with $\mu=\rho'_1$, $\nu=\rho'_2$ and $s_1=5/4$, where $\rho'_j = \cR_{\ell'}\rho_j$ and $\ell'=\lfloor \xi \ell\rfloor$. To begin, notice that since the $\rho_j$ were obtained from Proposition \ref{prop:bourgain}, we have
\[
\cE_s(\rho'_j) \lesssim \cE_s(\rho_j) \lesssim 2^{o_{T,\e}(1)T\ell}.
\]
Now, thanks to (b) both $f_1$ and $f_2$ satisfy the hypotheses of Corollary \ref{c:2/57} with $u-o_{T,\e}(1)$ in place of $u$, so we get
\[
f_j(x) \ge \frac{x-3\eta(1-4u)}{4} - o_{T,\e}(1) \quad\text{on } [0,\xi]
\]
for $j=1,2$. Using (a) above and recalling that $\ell'=\lfloor \xi \ell\rfloor$, this yields
\[
\sum_{i=1}^k 1/4-\sigma_i^{(j)} \le \ell\left(\frac{3}{4}\eta(1-4u)+o_{T,\e}(1)\right),\quad k=1,\ldots, \ell'.
\]
Lemma \ref{lem:regular-mass-decay} applied to $\rho'_2$ yields
\begin{equation}  \label{eq:rho'-small-Frostman-exp}
\rho'_2(B(x,r)) \lesssim 2^{(3\eta(1-4u)/4+o_{T,\e}(1))T\ell} \, r^{5/4},
\end{equation}
while Lemma \ref{lem:energy-regular} shows that
\begin{equation} \label{eq:rho'-small-energy}
 \cE_{5/4+o_{T,\e}(1)}(\rho'_1) \lesssim 2^{o_{T,\e}(1)T\ell} \cE_{5/4}(\rho'_1) \lesssim 2^{(3\eta(1-4u)/4+o_{T,\e}(1))T\ell}.
\end{equation}
We also have the separation property
\[
\dist(\supp(\rho'_1),\supp(\rho'_2)) \gtrsim 1.
\]
We have verified that the assumptions of Corollary \ref{cor:GIOW} are satisfied for $\mu=\rho'_1$, $\nu=\rho'_2$, with $s_0=s$, $s_1=5/4$ and
\[
K_{\rho'_j,s} \lesssim 2^{o_{T,\e}(1) T\ell},\, \wt{K}_{\rho'_2,5/4} \lesssim 2^{T\ell(3\eta(1-4u)/4+o_{T,\e}(1))} .
\]
Let $C_1(\de),C_2$ be the numbers given by the corollary. Note that
\[
\rho'_1(\cR_{\ell'}A_1) = \cR_{\ell'}\rho_1(\cR_{\ell'}A_1) \ge \rho_1(A_1) \ge \ell^{-2}/5.
\]
On the other hand, we can choose $\delta = o_{T,\e}(1)$ and $R=2^{o_{T,\e}(1)T\ell}$ so that
\[
\ell^{-2}/5 > C_1(\de) (K_{\rho'_1,s} K_{\rho'_2,s} )^{1/2} R^{-\delta/C_2}\quad\text{and}\quad R^{-\delta/C_2}\le 2^{-T\ell/O_{T,\e}(1)}.
\]
It follows from \eqref{eq:H-large-mass} that we can find $B\subset \supp(\rho'_2)$ with
\[
\rho'_2(B)\ge \ell^{-2}/8 \gg R^{-\de/C_2}
\]
such that for all $y\in B$ it holds that
\[
\rho'_1(A_y)\ge \ell^{-2}/8 \quad\text{where } A_y=\{ x: (x,y)\in\cR_{\ell'}H\}.
\]
By Corollary \ref{cor:GIOW} and \eqref{eq:rho'-small-energy}, we can find a point $y\in B$ such that $\rho'_1(A_y) \gtrsim \ell^{-2}$ and
\begin{align*}
|\Delta_y(\cR_{\ell'}A_y)| &\ge 2^{-(\eta(1-4u)/4-o_{T,\e}(1))T\ell} \left( \cE_{5/4+o_{T,\e}(1)}(\rho'_1)\right)^{-1}\\
  & \gtrsim 2^{-(\eta(1-4u)+o_{T,\e}(1)) T\ell}.
\end{align*}
Using that $\dist(y,\supp(\nu))\lesssim 2^{-T\ell'}$, pick $z\in\supp(\nu)$ such that
\begin{align*}
\log\cN(\Delta_{z}(A_1),\ell) &\ge \log\cN(\Delta_{z}(A_1),\ell') \\
 &\ge (1-o_{T,\e}(1)) T\ell' - \eta(1-4u) T\ell \\
 &\ge (\xi-\eta(1-4u)-o_{T,\e}(1))T\ell.
\end{align*}
In light of \eqref{eq:identity-chi}, this yields \eqref{eq:claim-full-dist-thm} and concludes the proof of the Hausdorff dimension bound \eqref{eq:full-dist-Hausdorff-dim-claim}.

\subsection{Proof of Theorem \ref{thm:full} (box dimension part)}

Now we turn to the part of Theorem \ref{thm:full} concerning the lower box dimension of $\Delta_y(A)$. We now assume that $\mathcal{H}^s(A)>0$ and aim to prove that there is $y\in A$ such that
\begin{equation} \label{eq:lower-bound-box-dim}
\lbdim(\Delta_y(A)) \ge \chi(s-1).
\end{equation}
In fact, we will prove that $\nu$-almost all $y$ are good, where $\nu$ is an  $s$-Frostman measure on $A$, i.e. $\nu(B(x,r)) \lesssim r^s$.

As usual we assume that $A\subset [0,1)^2$, and allow all implicit constants to depend on $s$ and the implicity constant in the Frostman condition. Moreover, we fix $u=s-1$, $\chi=\chi(u)$ and $\eta,\xi$ as given by Corollary \ref{c:2/57}.

We claim that it is enough to show that for large enough $\ell$ (depending on $T,\e,\tau$) if $\nu(B)>\ell^{-2}$, then there is $y\in B$ such that
\begin{equation} \label{eq:claim-lower-bound-box-dim}
\log\cN(\Delta_y A,\ell) \ge (\chi-o_{T,\e,\tau}(1))T\ell.
\end{equation}
Indeed, assuming this, it follows that for $\nu$-almost all $y$ the inequality \eqref{eq:claim-lower-bound-box-dim} holds for all sufficiently large $\ell$ (depending on $T,\e,\tau$ and $y$). Taking a suitable sequence $(T_j,\e_j,\tau_j)$ we see that \eqref{eq:lower-bound-box-dim} holds for $\nu$-almost all $y$.

Before we embark on the proof of \eqref{eq:claim-lower-bound-box-dim}, we present a lemma that will allow us to go back to a setting very similar to that of \S\ref{subsec:full-Hausdorff}. By a square we mean one with axes-parallel sides, and we denote side-lengths by $\lambda(\cdot)$.
\begin{lemma} \label{lem:separated-squares}
Suppose $\rho\in\cP([0,1)^2)$ satisfies $\rho(B(x,r)) \le L\, r^s$ for some $s\in (1,2]$, $L>0$. Then there exists squares $Q, Q_1, Q_2$ such that $Q_j\subset Q$, $\rho(Q_j)\ge L^{-O_s(1)}$ and
\[
\dist(Q_1,Q_2) \gtrsim \lambda(Q).
\]
We emphasize that the implicit constants do not depend on $L$.
\end{lemma}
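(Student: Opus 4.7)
The plan is to find $Q, Q_1, Q_2$ by iterative zoom-in. At stage $k$ I maintain a square $Q_k$ with $\rho(Q_k) \ge p_k$, starting from $Q_0 = [0,1]^2$ and $p_0 = 1$. Partition $Q_k$ into a $3 \times 3$ grid of subsquares of side $\lambda(Q_k)/3$ indexed by $(i,j) \in \{0,1,2\}^2$, and call a subsquare \emph{heavy} if its $\rho$-mass is at least $p_k/20$. Two subsquares whose indices satisfy $\max(|i_1-i_2|,|j_1-j_2|) \ge 2$ share no corner and are at distance at least $\lambda(Q_k)/3$; if two such heavy subsquares exist, stop and take them as $Q_1,Q_2 \subset Q = Q_k$, yielding the required separation and mass $\ge p_k/20$ each.

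Otherwise every two heavy subsquares are corner-adjacent, i.e.\ both coordinate projections of the heavy-index set have diameter at most $1$, so these indices all fit inside some $2 \times 2$ block $\{a,a+1\}\times\{b,b+1\}$. Let $Q_{k+1}$ be that block, of side $\tfrac{2}{3}\lambda(Q_k)$; since the five subsquares outside $Q_{k+1}$ are all non-heavy, they carry mass at most $5(p_k/20) = p_k/4$, so $\rho(Q_{k+1}) \ge \tfrac{3}{4} p_k$. Set $p_{k+1} = \tfrac{3}{4} p_k$ and iterate.

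Termination is forced by the Frostman hypothesis: by induction $p_k \ge (3/4)^k$ and $\lambda(Q_k) \le (2/3)^k$, while applying the Frostman bound to a ball containing $Q_k$ yields $\rho(Q_k) \lesssim_s L\,\lambda(Q_k)^s$. Thus $(3/4)^k \lesssim_s L (2/3)^{sk}$, which rearranges to $k\bigl(s\log(3/2)-\log(4/3)\bigr) \le \log L + O_s(1)$. Since $s > 1$, the bracket is a positive constant $c_s$, so the procedure halts at some $k^* \le O_s(\log L + 1)$, at which point $\rho(Q_j) \ge (3/4)^{k^*}/20 \ge L^{-O_s(1)}$, as required. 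The only slightly non-routine step is the combinatorial observation ``mutually corner-adjacent cells of the $3\times 3$ grid lie in a $2\times 2$ sub-block,'' which I expect to be the main (minor) obstacle and which reduces to bounding coordinate diameters of the heavy-index set.
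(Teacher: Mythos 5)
Your proof is correct and follows essentially the same strategy as the paper's: iterate a dichotomy where either you find two separated sub-squares each carrying a definite fraction of the current mass (stop), or the mass concentrates on a strictly smaller sub-square retaining most of it (recurse), with the Frostman condition forcing termination after $O_s(\log L)$ steps. The paper implements this with a $100\times100$-scale covering and a tunable parameter $K$, whereas your $3\times 3$ grid with the explicit $p_k/20$ threshold and the ``heavy cells lie in a $2\times 2$ block'' observation gives a slightly cleaner, constant-free version of the same argument; the needed inequality $s\log(3/2)>\log(4/3)$ indeed holds for all $s>1$.
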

\begin{proof}
Let $K=K(s)\gg 1$ be a number to be determined later. Let us say that a square $Q$ is \emph{good} if it contains two sub-squares $Q_1,Q_2$ with $\lambda(Q_j)\ge \lambda(Q)/100$, $\rho(Q_j)\ge \rho(Q)/K$ and $\dist(Q_1,Q_2)\ge \lambda(Q)/100$; otherwise, we say that $Q$ is bad.

We claim that if $Q$ is bad then it contains a sub-square $Q'$ with $\lambda(Q')\le \lambda(Q)/10$ and $\rho(Q')\ge \rho(Q)(1-O(1/K))$. Indeed, if $K$ is large enough then certainly there is a sub-square $Q_0$ with $\lambda(Q_0)=\lambda(Q)/100$ and $\rho(Q_0)\ge \rho(Q)/K$. Let $Q'$ be the smallest square that contains all sub-squares of $Q$ with side-length $\lambda(Q)/100$ that are at distance $\le 1/100$ of $Q_0$. Then $\lambda(Q')\le\lambda(Q)/10$ and $Q\setminus Q'$ can be covered by $O(1)$ squares of side length $\lambda(Q)/100$, each of which has $\rho$-mass at most $\rho(Q)/K$, so the claim follows.

We now construct inductively a decreasing sequence of squares $(R_j)$ as follows. Let $R_0=[0,1)^2$.  If $R_j$ is good we stop. Otherwise, we  let $R_{j+1}$ be a subsquare of $R_j$ with $\lambda(R_{j+1})\le \lambda(R_j)/10$ and $\rho(R_{j+1})\ge \rho(R_j)(1-O(1/K))$.  By construction, if $K$ was taken large enough in terms of $s$ then
\begin{align*}
\rho(R_j) &\ge (1-O(1/K))^j \\
&\ge \lambda(R_j)^{\log(1-O(1/K))/\log(1/10)} \\
&\ge 100 \lambda(R_j)^{-1} \lambda(R_j)^s,
\end{align*}
and this contradicts the Frostman assumption if $\lambda(R_j)\le L^{-1}$. Hence the process must stop in $\lesssim \log L$ steps. Letting $Q=R_j$, the last square obtained in this process, we obtain the claim.
\end{proof}

Now we begin the proof of \eqref{eq:claim-lower-bound-box-dim}. Let $B$ and $\ell$ with $\nu(B)>\ell^{-2}$ be given. We apply Lemma \ref{lem:separated-squares} to $\nu_B$, with $L=O(\ell^{2})$; let $Q,Q_1,Q_2$ be the squares given by the lemma. Moreover, let $H$ be the homothety mapping $Q$ to $[0,1)^2$, which has ratio $\le \ell^{O(1)}$ by the lemma and the Frostman condition. Finally, let $\mu_j = H(\nu_{Q_j\cap B})$. By construction, we have
\[
\mu_j(B(x,r)) \lesssim \ell^{O(1)} \, r^s,
\]
and
\[
\dist(\supp(\mu_1),\supp(\mu_2)) \gtrsim 1.
\]
These properties of $\mu_1,\mu_2$ are very similar to those in \S\ref{subsec:full-Hausdorff}, except that we have the extra factor $\ell^{O(1)}$ in the Frostman constant (and hence also in the $s_0$-energy of $\mu_j$, where $s_0=(1+s)/2$). However, since $\ell^{O(1)}\ll 2^{\e T\ell}$ for large $\ell$, the proof of the Hausdorff dimension part carries over in exactly the same way to yield (for $j$ equal to either $1$ or $2$) a point $z\in \supp\mu_j$ such that
\[
\log\cN(\Delta_z(\supp(\mu_{2-j})),\ell) \ge (\chi-o_{T,\e,\tau}(1))T\ell.
\]
Applying $H^{-1}$ to $\Delta_z(\supp(\mu_{2-j}))$, we conclude that $y=H^{-1}z$ satisfies
\begin{align*}
\log\cN(\Delta_y(B),\ell) &\ge \log\cN(\Delta_y(B),\ell+O(\log\ell))-O(\log\ell) \\
&\ge \log\cN(\Delta_z(\supp(\mu_{2-j})),\ell)-O(\log\ell) \\
&\ge (\chi-o_{T,\e,\tau}(1))T\ell.
\end{align*}
Since $B\subset A$, this concludes the proof of \eqref{eq:claim-lower-bound-box-dim} and with it of Theorem \ref{thm:full}.

%\bibliographystyle{plain}
%\bibliography{pinneddistimprov}

\end{document}